\allowdisplaybreaks \numberwithin{equation}{section}
\newtheorem{theorem}{Theorem}[section]
\newtheorem{proposition}[theorem]{Proposition}
\newtheorem{lemma}[theorem]{Lemma}
\newtheorem{claim}[theorem]{Claim}
\theoremstyle{definition}
\newtheorem{definition}[theorem]{Definition}
\theoremstyle{remark}
\newtheorem{remark}[theorem]{Remark}
\newcommand{\FF}{\mathbb{F}}
\newcommand{\Pp}{\mathbb{P}}
\newcommand{\Oc}{\mathcal{O}}
\def\leq{{\leqslant}}
\def\geq{{\geqslant}}
\begin{document}

\title[Big Vector Bundles on Surfaces and Fourfolds]{Big Vector Bundles on Surfaces and Fourfolds}

\thanks{This research started during the visit of both authors at the {\em Laboratoire de Math\'ematiques et Applications}--Universit\'e de Poitiers; the authors thank 
this institution for the excellent hospitality and the friendly atmosphere. The collaboration has been partially supported by the
Research Project \emph{``Families of curves: their moduli and their related varieties"} (CUP: E81-18000100005) - Mission Sustainability - University of Rome Tor Vergata. 
Finally, the authors would like to thank the anonymous referee for carefully reading the manuscript, for his/her comments on the first version of this work (which have improved the exposition of the paper) and for valuable remarks inspiring Examples \ref{ExSurf}-(e) and \ref{Ex4fold}-(b) of the present version.\color{black}} 
\thanks{MSC2000 Classification: Primary 14J60. Secondary 14J35}

\author{Gilberto Bini}
\address{Gilberto Bini, Dipartimento  di Matematica ``F. Enriques", Universit\`{a} degli Studi di Milano, Via C. Saldini 50, 20133 Milano -- Italy}
\email{gilberto.bini@unimi.it}

\author{Flaminio Flamini}
\address{Flaminio Flamini, Dipartimento  di Matematica, Universit\`{a} degli Studi di Roma ``Tor Vergata", Viale della Ricerca Scientifica 1, 00133 Roma -- Italy}
\email{flamini@mat.uniroma2.it}

\begin{abstract} The aim of this note is to exhibit explicit sufficient cohomological criteria ensuring bigness of globally generated, rank-$r$ vector bundles, $r \geqslant 2$, 
on smooth, projective varieties of even dimension $d \leqslant 4$. We also discuss connections of our general criteria to some recent results of other authors, as well as 
applications to tangent bundles of Fano varieties, to suitable Lazarsfeld-Mukai bundles on four-folds, etcetera.  
\end{abstract}

\maketitle

\section{Introduction}

Let $V$ be a smooth projective variety over the field of complex numbers, and denote by $d$ the dimension of it. As well known, the geometry of $V$ can be described by way of linear systems of divisors $D$ on $V$. The resulting mapping from $V$ to projective space has different characteristics according to the positivity of ${\mathcal O}_V(D)$: see \cite{Laz1} for a comprehensive treatment of these topics. In particular, we recall that a divisor $D$ is {\em big} if and only if the {\em Kodaira-Iitaka dimension} of ${\mathcal O}_V(D)$ is equal to $\dim(V)$. More geometrically, the Iitaka fibration theorem implies that ${\mathcal O}_V(D)$ is big if and only if the mapping $\phi_m:V \dashrightarrow {\mathbb P}H^0\left(V,{\mathcal O}_V(mD)\right)^{\vee}$ is birational onto its image for some $m>0$: see, for instance, \cite[p.\;139]{Laz1}. Moreover, just to mention a few results, there are cohomological and numerical criteria for a divisor to be big. Remarkably, a globally generated line bundle - hence nef - is big if and only if the top self intersection $c_1({\mathcal O}_V(D))^d$ is positive (cf.\;e.g.\;\cite[Thm.\;2.2.16]{Laz1}). 

As recalled in the Introduction to \cite{Laz2}, in the past 60 years there has been a considerable effort to generalize the theory of positivity of line bundles to vector bundles, in particular to extend the cohomological and topological properties of ample divisors. In this paper, we will focus on some aspects of the whole theory, which is rather articulated: the reader may find a recent exposition in \cite{GG}, where the various notions of positivity for vector bundles are studied in connection with topics from Hodge Theory, Satake-Baily-Borel completion of period mappings, Iitaka conjecture, etcetera. Also, positivity of vector bundles, especially the tangent bundle $T_V$, is related to the classification of projective manifolds: see, for instance, \cite{CP} and, for the more general K\"ahler manifolds, \cite{DPS}.

We recall that a rank $r \geq 2$ vector bundle $E$ on $V$ is ample (nef) if the tautological bundle ${\mathcal O}_{{\mathbb P}(E)}(1)$ of the projective bundle $\pi: {\mathbb P}(E) \rightarrow V$ is an ample (nef) line bundle. As for the notion of bigness, there are various definitions: see, for instance, \cite{BK} for them and their relation to base loci of vector bundles. Here we will deal with the notion of {\em $L$-bigness}, i.e., a vector bundle $E$ is $L$-big if and only if the tautological bundle of ${\mathbb P}(E)$ is a big line bundle (cf.\cite[(6.1.2) in Def.\;6.1]{BK}). In what follows, we will drop the $L$ and simply talk about {\em big vector bundles}. As in the case of line bundles, bigness of vector bundles has a geometric interpretation in terms of birational images of the ruled variety ${\mathbb P}(E)$ in suitable projective spaces.

In this paper, our aim is to investigate natural cohomological conditions for a globally generated vector bundle $E$ to be big on $V$. Roughly speaking, this is our strategy. Since a globally generated vector bundle is nef (see, for the sake of completeness, Remark \ref{rem:numerical}), \cite[Theorem\;2.5]{DPS} implies that the nefness of $E$ can be measured in terms of the non-negativity of $(-1)^ds_d(E)$, where $s_d(E)$ is the top Segre class of $E$. What's more, a nef vector bundle has a well-defined {\em numerical dimension} $n(E)$, which is the numerical dimension of the tautological bundle ${\mathcal O}_{{\mathbb P}(E)}(1)$, i.e., the largest non-negative integer $n(E)$ such that $
c_1\left({\mathcal O}_{{\mathbb P}(E)}(1)\right)^{n(E)}$
is not numerically equivalent to $0$ (cf. Def. \ref{def:numdim2} below). 

If the $d^{th}$ Segre class of $E$ is positive, which restricts our investigation to $d$ even, one can see that the numerical dimension $n(E)$ equals the dimension of ${\mathbb P}(E)$, which in turn means that the tautological bundle on $V$ is a big line bundle, so $E$ is big. 

\vskip 30pt

With this setting, here are our results.

\vskip 7pt

\noindent
{\bf Theorem} (cf. Theorem \ref{thm:surf} below) {\em Let $V$ be any smooth, irreducible projective surface. 
Let $E$ be a globally--generated, rank--$r$ vector bundle on $V$, $r \geqslant 2$, such that $h^0(E) \geqslant r+2$. 
Assume further that $h^1((\det E)^{-1}) = 0$. Then $E$ is a big vector bundle on $V$.}

\vskip 7pt 

As for $V$ of dimension $d=4$, in order to state our result, we first need to recall that global generation of $E$ gives rise to the exact sequence: 
\begin{equation}\label{eq:mukailaz}
0 \to M_E \to H^0(E) \otimes \mathcal O_V \stackrel{ev}{\longrightarrow} E \to 0,
\end{equation}
where $M_E$ is the so called {\em Lazarsfeld-Mukai bundle} associated to $E$. 
Tensoring with $E$ and passing to cohomology, one has a natural induced map 
\begin{equation}\label{eq:muE}
H^0(E)^{\otimes 2} \stackrel{\mu_E}{\longrightarrow} H^0(E^{\otimes 2}).
\end{equation}

\vskip 7pt

\noindent
{\bf Theorem} (cf. Theorem \ref{thm:4fold} below) {\em Let $V$ be any smooth, irreducible projective four-fold. 
Let $E$ be a globally--generated, rank--$r$ vector bundle on $V$, $r \geqslant 2$, such that $h^0(E) \geqslant r+4$. 
Assume further that: 
\begin{eqnarray*}
q(V) := h^1(\mathcal O_V) & = & 0, \nonumber \\
h^i(V, (\det E)^{-1}) & = &  0, \; 1 \leqslant i \leqslant 3, \\
h^3( V, E^{\vee} \otimes (\det E)^{-1}) & = &  0, \nonumber\\
\mu_E \;\; {\rm is} \;\; {\rm injective}. & &\nonumber
\end{eqnarray*}
Then $E$ is a big vector bundle on $V$.}

\vskip 7pt

In order to write down the cohomological constraints appearing in both theorems, we assume that the $d^{th}$ Segre class of $E$ vanishes; so does the top Chern class of a suitable 
rank-$d$, associated vector bundle $N^{\vee}$ on $V$, where $N$ is the kernel of the evaluation map from $W\otimes {\mathcal O}_V$ to $E$, $W$ being 
a general subspace of $H^0(E)$ of dimension $r+d$. This would imply the existence of a nowhere vanishing section of $N^{\vee}$. The conditions in Theorem \ref{thm:surf} and \ref{thm:4fold} are sufficient to contradict the existence of any such section.

In principle, our results can be extended to any even dimension but the cohomological conditions are in fact more complicated to be written down. Indeed, already in the case of surfaces and four-folds, we need to investigate exterior powers of vector bundles that are defined in terms of short exact sequences. This is possible via successive short exact sequences which become more numerous, as the dimension of $V$ increases. Nonetheless, already in dimension $2$ and $4$ our results give interesting applications. 

As for dimension $2$, Theorem \ref{thm:surf} can be viewed as the "bigness"-version of the ampleness criterion given in \cite[Prop.\;1]{Beau}. Theorem \ref{thm:surf} applies to 
any smooth, projective surface and to any vector bundle $E$ on it, which is globally generated and has arbitrary rank, not only two; moreover, we do not assume the Neron- Severi group to be cyclic and generated by $c_1(E)$. In Section \ref{ExSurf}, we explore some of the various applications of Theorem \ref{thm:surf}; in Example (a) and Example (b), we exhibit vector bundles that are big but not ample. In Example (c), we discuss unsplit vector bundles on Segre-Hirzebruch surfaces ${\mathbb F}_e$, which turn out to be very ample. In Example (d) the reader may find split vector bundles of higher rank. Possible other applications, along the lines of Example (c), give unsplit vector bundles of rank higher than two. \color{black} Example (e), which has been inspired by questions of the Referee, shows that condition 
$h^1((\det E)^{-1}) = 0$ in Theorem \ref{thm:surf} is sufficient, but not necessary, for bigness. \color{black}

As for dimension $4$, we present two possible applications. First, let $V$ be a Fano manifold, i.e. a smooth projective variety such that the anti-canonical is ample. To start with, as proved, for instance, in \cite[Proposition\;4.1]{hsiao}, if $T_V$ is nef and big, then $V$ is a Fano manifold. Conversely, as, for instance, in loc. cit., Question 4.5., one might ask $$\mbox{\it If $V$ is Fano with nef tangent bundle $T_V$, is it true that $T_V$ is big?}.$$ As explained in \cite[p.\,1550098-8]{hsiao}, the affirmative answer to the previous question has been proved up to dimension $3$. Theorem \ref{thm:4fold} allows us to answer this question in dimension $4$ under the assumption $E$ is globally generated and $h^0(E) \geq 9$. Inspired by  
\cite[Prop.\;2]{Beau}, dealing with Lazarsfeld-Mukai bundles on a smooth surface of irregularity $0$ with cyclic Neron-Severi group, we also discuss examples of suitable Lazarsfeld-Mukai bundles on four-folds which turn out to be big but which satisfy all but one of the assumptions in Theorem \ref{thm:4fold} below, proving that Theorem \ref{thm:4fold} gives sufficient but non-necessary conditions for bigness. \color{black}

As for the plan of the paper, in Section \ref{ChernSegre} we recall some preliminary results, in particular on Chern and Segre classes, as well as on positivity on vector bundles. In Section \ref{Surf}, we prove Theorem \ref{thm:surf} and some possible applications of it. Finally, in Section \ref{4fold} we pass to dimension $4$. 

In what follows, we work over the complex field $\mathbb{C}$.  For any smooth, projective variety $V$,
$A_n(V)$ will denote the group of $n$-cycles modulo rational equivalence on $V$, where $0 \leqslant n \leqslant \dim (V)$ (cf. \cite[\S\,1]{Fu}). Unless otherwise stated, 
from now on we will set $d := \dim (V)$ and 
$E$ a vector bundle of rank $r$ on $V$. The {\em dual bundle} of $E$ will be denoted by $E^{\vee}$, unless $E=L$ is a line bundle whose dual will be simply denoted 
by $L^{-1}$. For not reminded terminology and notation, we refer the reader to \cite{H}.


\section{Preliminaries}\label{Pre} We briefly recall some results which are frequently used in the paper.


\subsection{Chern and Segre classes}\label{ChernSegre} For $V$ and $E$ as above, we set $\mathbb{P}(E) := Proj (Sym(E))$ (i.e. $\mathbb{P}(E)$ is the projective--bundle parametrizing $1$-dimensional quotients of the fibres of $E$), 
${\mathcal O}_{\mathbb{P}(E)} (1)$ the {\em tautological line--bundle} on $\mathbb{P}(E)$ and $\mathbb{P}(E) \stackrel{\pi}{\longrightarrow} V$ the canonical projection (cf. e.g. \cite{H}).  By \cite[\S\,1--3]{Fu}, there are homomorphisms
$$A_n(V) \to A_{n-k}(V), \;\; \alpha \to s_k(E) \cap \alpha,$$which are defined by the formula 
\begin{equation}\label{eq:IID4}
s_k(E) \cap \alpha := \pi_*\left(c_1({\mathcal O}_{\mathbb{P}(E)} (1))^{r-1+k} \cap \pi^* (\alpha)\right),
\end{equation}where $\pi^*: A_n(V) \to A_{n+r-1}(\mathbb{P}(E))$ is the flat pull-back (cf.\;\cite[\S,1.7]{Fu}), $\pi_*: 
A_{n-k} ( \mathbb{P}(E)) \to A_{n-k} ( V)$ the push-forward (cf.\;\cite[\S\,1.4]{Fu}) whereas 
$c_1({\mathcal O}_{\mathbb{P}(E)} (1))^{r-1+k}\cap \;_{-}  : A_{r-1 +k} (\mathbb{P}(E)) \to A_{n-k} ( \mathbb{P}(E))$ 
the iterated first Chern class homomorphism (cf.\;\cite[\S\,2.5]{Fu}). 

$s_k(E)$ in \eqref{eq:IID4} is called the $k^{th}$--{\em Segre class of} $E$ whereas $s(E) := 1 + s_1(E) + s_2(E) + \cdots $ the  
{\em total Segre class} of $E$. $s_k(E)$ is a polynomial in the Chern classes $c_1(E), \ldots, c_r(E)$ of $E$; indeed given the Chern polynomial of $E$, 
$$c_E(t) := \sum_{k=0}^r c_k(E) t^k = 1 + c_1(E) t + c_2(E) t^2 + \cdots + c_r(E) t^r,$$the Segre classes defined in \eqref{eq:IID4} turn out to be 
coefficients of the formal power series 
$$s_E (t) := \sum_{k=0}^{+\infty} s_k(E) t^k = 1 + s_1(E) t + s_2(E) t^2 + \cdots$$defined to be the inverse power series of $c_E(t)$, i.e. $s_E(t) = c_E(t)^{-1}$ 
(cf.\;e.g.\;\cite[\S\,3.2]{Fu}). Explicitely, one has (cf.\;also\;\cite[Examples\;8.3.3--8.3.5]{Laz2}):  
\begin{equation}\label{eq:segre}
c_1(E) = - s_1(E), \; c_2(E) = s_1(E)^2 - s_2(E), \ldots, c_k(E) = - s_1(E) c_{k-1}(E) - s_2(E) c_{k-2}(E) - \cdots - s_k(E),\; \forall \; k \geqslant 3.
\end{equation}
If $L$ is any line bundle on $V$, then one has (cf. \cite[Rem.\;3.2.3\;(a),\;Ex.\;3.2.2,\;Ex.\;3.1.1]{Fu}): 
\begin{equation}\label{eq:dualchern}
c_k(E^{\vee}) = (-1)^k c_k(E) \;\; {\rm and} \;\; c_k(E \otimes L) = \sum_{j=0}^k {r-j \choose k - j} c_j(E) c_1(L)^{k-j}, \;\; 1 \leqslant k \leqslant r,
\end{equation}
\begin{equation}\label{eq:dualsegre}
s_k(E^{\vee}) = (-1)^k s_k(E) \;\; {\rm and} \;\; s_k(E \otimes L) = \sum_{j=0}^k (-1)^{k-j}{r-1+k \choose r-1 + j} s_j(E) c_1(L)^{k-j},
\end{equation}where $c_0(E) = s_0(E) =1$ and where $c_1(L)^{k-j}$ denotes the $(k-j)^{th}$ self--intersection of $c_1(L)$.


\subsection{Positivity of vector bundles}\label{Positivity} We remind some definitions concerning certain {\em dimension} and {\em positivity} notions related to vector bundles over a smooth, projective variety $V$ 
from \cite{DPS}, \cite[\S\;II]{GG} and \cite[\S\;6-8]{Laz2}. These concepts will be first reminded for line bundles $L$ on $V$ and then, for vector bundles $E$ of rank $r \geqslant 2$, 
the definitions being related via the canonical association $ E \to V \;\; \leadsto \;\; {\mathcal O}_{\mathbb{P}(E)} (1) \to \mathbb{P}(E)$.

\vskip 5pt

\noindent
$\bullet$ {\bf Kodaira--Iitaka dimension, bigness, nefness}. Take $L$ any line bundle on $V$; its {\em Kodaira--Iitaka dimension}, denoted by $k(L)$, is defined as follows:
\[
k(L) := \left\{\begin{array} {cl} 
- \infty & {\rm if} \; h^0(L^{\otimes m}) = 0, \; \forall \; m \in \mathbb{N} \\
{\rm max}_{m \in \mathbb{N}} \; \dim (\varphi_{L^{\otimes m}}(V)), & {\rm otherwise} 
\end{array}
\right.
\]where $V \stackrel{\varphi_{L^{\otimes m}}}\dashrightarrow \mathbb{P} (H^0(L^{\otimes m})^{\vee})$ denotes the rational map given by the linear system $|L^{\otimes m}|$ (cf. e.g. \cite[\S\;II.A]{GG}). Then, 
\begin{equation}\label{eq:bigL}
L \; \mbox{is said to be {\em big} if} \; k(L) = \dim (V)
\end{equation}(cf. \cite[Def.\;2.2.1]{Laz2}). Finally, $L$ is said to be {\em nef} if $L \cdot C \geqslant 0$ for any effective curve $C \subset V$.

Let now $E$ be any rank--$r$ vector bundle on $V$, with $r \geqslant 2$. Similary as above, its {\em Kodaira--Iitaka dimension} $k(E)$ is defined to be $k(E):= 
k({\mathcal O}_{\mathbb{P}(E)} (1))$. $E$ is said to be a {\em big} vector bundle 
if ${\mathcal O}_{\mathbb{P}(E)} (1)$ is a big line bundle on $\mathbb{P}(E)$ (cf.\;e.g.\cite[Ex.\;6.1.23]{Laz2}).  From \eqref{eq:bigL}, we have therefore 
\begin{equation}\label{eq:bigE}
E \; \mbox{is a big vector bundle if and only if} \; k(E) = \dim (V) + r -1.
\end{equation} Finally, $E$ is said to be {\em nef} if ${\mathcal O}_{\mathbb{P}(E)} (1)$ is a nef line bundle on $\mathbb{P}(E)$ (cf.\;e.g. \cite[Definition\;1.9]{DPS}).

\begin{remark}\label{rem:numerical} Assume that $E$ is globally generated, then $E$ is nef. Indeed, taking ${\mathbb P}(E) \stackrel{\pi}{\rightarrow} V$ the natural projection, global generation of $E$ ensures that $\pi^*E$ is globally generated. 
Since ${\mathcal O}_{{\mathbb P}(E)}(1)$ is a quotient of $\pi^*E$, the tautological line bundle ${\mathcal O}_{{\mathbb P}(E)}(1)$ is globally generated too. Hence, since $|{\mathcal O}_{{\mathbb P}(E)}(1)|$ defines a morphism to a suitable 
projective space $\mathbb{P}$, then ${\mathcal O}_{{\mathbb P}(E)}(1)$ is nef because it is the pull-back via this morphism of the very-ample line bundle ${\mathcal O}_{\mathbb P}(1)$, proving the assertion. 
\end{remark}

\vskip 5pt

\noindent
$\bullet$ {\bf Numerical dimension}. As above, we start with the line bundle case.

\begin{definition} (cf.\cite[II.E,\,p.\,24]{GG}) \label{def:numdim} Let $L$ be any nef line bundle. The {\em numerical dimension} of $L$ is defined to be 
the largest integer $n(L)$ such that $c_1(L)^{n(L)} \neq 0$.
\end{definition}

\noindent
Relating the Kodaira-Iitaka and the numerical dimensions of a nef line bundle $L$, from \cite{Dem} one has (cf.\;also\;\cite[(II.E.1),\;p.24]{GG}): 
\begin{equation}\label{eq:IIE1}
k(L) \leqslant n(L), \; \mbox{and equality holds if} \; n(L) = 0,d.
\end{equation}

Let now $E$ be a globally generated vector bundle, of rank $r \geqslant 2$. From Remark \ref{rem:numerical} $E$ is nef, i.e. ${\mathcal O}_{{\mathbb P}(E)}(1)$ is a nef line bundle on ${\mathbb P}(E)$. Therefore, it makes sense to consider 
the numerical dimension of such a nef line bundle. Indeed, in accordance with \cite[\S\,II.E,\;p.25]{GG}, we set 

\begin{definition}\label{def:numdim2} Let $E$ be a globally generated vector bundle of rank $r$ on $V$. The {\em numerical dimension} of $E$ is 
$n(E) := n({\mathcal O}_{\mathbb{P}(E)} (1))$.
\end{definition}

\noindent
Notice that, since ${\mathcal O}_{\mathbb{P}(E)} (1)$ is very--ample on the fibres of the projection $\mathbb{P}(E) \stackrel{\pi}{\longrightarrow} V$, one has
\begin{equation}\label{eq:boundsE}
r-1 \leqslant n(E) \leqslant \dim(\mathbb{P}(E)) = \dim(V) + r - 1 = d + r -1.
\end{equation}

On the other hand, since $E$ is nef, by \eqref{eq:IID4} and Definition \ref{def:numdim} we have 
\begin{equation}\label{eq:IIE2}
n(E) \; \mbox{is the largest integer with} \; s_{n(E)-r+1}(E) \neq 0.  
\end{equation} 

\noindent
Notice that \eqref{eq:IIE2} coincides with (II.E.2) in \cite{GG}, where the authors consider a wider class of vector bundles.

Taking into account the definition of Kodaira-Iitaka dimension $k(E)$ above, \eqref{eq:IIE1} and Definition \ref{def:numdim2}, one has therefore
\begin{equation}\label{eq:IIG8}
k(E) \leqslant n(E), \; \mbox{where the equality holds when} \; n(E) = \dim(V) + r-1 = d+ r-1. 
\end{equation}

Notice that $n(E) = d+r-1$ (and so $k(E) = n(E) = \dim (\mathbb{P}(E))$) implies that $E$ is big. Moreover, by the global generation (and so nefness) of $E$, $s_{n(E)-r+1}(E) = s_d (E) \neq 0$ is equivalent to $s_d(E) > 0$, as it follows from \cite[Theorem\;2.5]{DPS}, with $d = k$ and $Y = V$, which applies to the Segre class $s_d(E)$ considered as a suitable Schur polynomial 
(cf. the "second interesting example" after \cite[Theorem\;2.5]{DPS}).

To sum up, for a globally generated rank-$r$ vector bundle $E$ on a $d$-dimensional smooth projective variety $V$, the 
bigness of $E$ is encoded by the positivity of the $d^{th}$ Segre class of $E$. In the sequel, we will be concerned in finding sufficient cohomological conditions on a globally generated vector bundle $E$ 
ensuring the positivity of $s_d(E)$. 


\section{The surface case}\label{Surf} In this section, $d=2$. Inspired by \cite[Lemma]{Beau}, one can prove the following

\begin{theorem}\label{thm:surf} Let $V$ be any smooth, irreducible projective surface. 
Let $E$ be a globally--generated, rank--$r$ vector bundle on $V$, $r \geqslant 2$, such that $h^0(E) \geqslant r+2$. 
Assume further that $h^1((\det E)^{-1}) = 0$. Then $E$ is a big vector bundle on $V$.
\end{theorem}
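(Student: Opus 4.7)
I would argue by contradiction, following the scheme already sketched in the Introduction. By Remark \ref{rem:numerical}, $E$ is nef, so the discussion at the end of Subsection \ref{Positivity} (based on \cite[Thm.~2.5]{DPS}) gives $s_2(E)\geqslant 0$, and the bigness of $E$ is equivalent to $s_2(E)>0$. Suppose, for contradiction, that $s_2(E)=0$. Choose a general $(r+2)$-dimensional subspace $W\subseteq H^0(E)$; since $\dim V=2$ and $h^0(E)\geqslant r+2$, a standard Schubert dimension count on the relevant Grassmannian guarantees that the restricted evaluation map $W\otimes\mathcal O_V\to E$ is still surjective, producing the exact sequence
\[
0\to N\to W\otimes\mathcal O_V\to E\to 0
\]
with $N$ locally free of rank $2$. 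Taking cohomology and using that $W\hookrightarrow H^0(E)$ by construction gives $h^0(N)=0$. From Whitney's formula $c(N)c(E)=1$ one reads $c_1(N^{\vee})=c_1(\det E)$ (so $\det N^{\vee}\cong\det E$) and $c_2(N^{\vee})=c_2(N)=s_2(E)=0$.

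Next, I would produce a nowhere vanishing section of $N^{\vee}$. Dualising the sequence above,
\[
0\to E^{\vee}\to W^{\vee}\otimes\mathcal O_V\to N^{\vee}\to 0,
\]
exhibits $W^{\vee}\subseteq H^0(N^{\vee})$ as a subspace generating $N^{\vee}$ at every point of $V$. By the Bertini-type theorem for globally generated vector bundles, a general $\ell\in W^{\vee}$ gives a section $\phi_\ell\in H^0(N^{\vee})$ whose zero scheme has pure codimension equal to $\operatorname{rk}(N^{\vee})=2$, and whose class in $A^2(V)$ equals $c_2(N^{\vee})=0$. Since any non-empty effective zero-cycle on the irreducible projective surface $V$ has positive degree, $Z(\phi_\ell)=\emptyset$; hence $\phi_\ell$ is a nowhere vanishing section of $N^{\vee}$.

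The section $\phi_\ell$ then gives the exact sequence
\[
0\to\mathcal O_V\xrightarrow{\phi_\ell}N^{\vee}\to\det N^{\vee}\cong\det E\to 0,
\]
whose extension class lies in $\mathrm{Ext}^1(\det E,\mathcal O_V)\cong H^1((\det E)^{-1})$, which vanishes by hypothesis. Hence the sequence splits, $N^{\vee}\cong\mathcal O_V\oplus\det E$, and dually $N\cong\mathcal O_V\oplus(\det E)^{-1}$. This forces $h^0(N)\geqslant 1$, contradicting $h^0(N)=0$. Therefore $s_2(E)>0$, and by the equivalence recalled at the end of Section \ref{Pre} the bundle $E$ is big.

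The main obstacle is the Bertini step in the second paragraph: one must verify that, for general $\ell\in W^{\vee}$, the section $\phi_\ell$ has zero locus of the expected codimension $2$, with no divisorial base component. This is exactly where the global generation of $N^{\vee}$ by the whole subspace $W^{\vee}$ (together with the smoothness of $V$ and the generic choice of $W$) is essential; everything else reduces to a formal Chern/Segre computation and to the extension vanishing $H^1((\det E)^{-1})=0$.
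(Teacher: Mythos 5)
Your proposal is correct and follows essentially the same route as the paper: a general $(r+2)$-dimensional $W\subseteq H^0(E)$ gives the rank-$2$ kernel $N$, the vanishing $s_2(E)=c_2(N^{\vee})=0$ forces a nowhere vanishing section of the globally generated bundle $N^{\vee}$, and the hypothesis $H^1((\det E)^{-1})=0$ splits the resulting extension to give $N\cong\mathcal O_V\oplus(\det E)^{-1}$. The only (harmless, and in fact slightly cleaner) deviation is the final contradiction: you observe directly that $h^0(N)=0$ from $W\hookrightarrow H^0(E)$ while the split form gives $h^0(N)\geqslant 1$, whereas the paper instead feeds $N\cong\mathcal O_V\oplus(\det E)^{-1}$ back into the evaluation sequence and uses $h^1((\det E)^{-1})=0$ a second time to conclude $h^0(E)\leqslant r+1$.
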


\begin{proof} Since $E$ is globally generated, one has 
\begin{equation}\label{eq:Laz615}
H^0(E) \otimes \mathcal O_V \stackrel{ev}{\longrightarrow} E \to 0. 
\end{equation} When $h^0(E) = r+2$, we set $W:= H^0(E)$. When otherwise  $h^0(E) > r+2$, we take $W \subset H^0(E)$ corresponding to the general point of the Grassmannian 
$\mathbb{G}(r+2, H^0(E))$ parametrizing $(r+2)$-dimensional sub-vector spaces of $H^0(E)$. As in 
\cite[Ex.\;6.1.5,\;p.9]{Laz2}, \eqref{eq:Laz615} defines a morphism $$\mathbb{P}(E) \stackrel{\phi}{\longrightarrow} \mathbb{P}(H^0(E)) = Proj(Sym(H^0(E)))$$(i.e. 
the projective space of one-dimensional quotients of $H^0(E)$, equivalently of one-dimensional sub-vector spaces of $H^0(E)^{\vee}$). 
Then $x := \dim ({\rm Im} (\phi)) \leqslant \dim (\mathbb{P}(E)) = r+1$. From the definition of $W$, 
the surjection $H^0(E)^{\vee} \to W^{\vee} \to 0$ gives rise to the linear projection 
$$ \mathbb{P}(H^0(E)) \stackrel{\pi_{\Lambda}}{\dashrightarrow}  \mathbb{P}(W),$$whose center $\Lambda$ is a linear subspace of $\mathbb{P}(H^0(E))$ of 
dimension $h^0(E) - r -3$. The generality of $W$ implies that $\Lambda$ is a general linear subspace of ${\mathbb P}(H^0(E))$. Thus, since $x \leq r+1$, the subvariety $\Lambda \cap Im(\phi)$ is empty, which implies that 
$\pi_{\Lambda} \circ \phi: \mathbb{P}(E) \to \mathbb{P}(W)$ is a morphism. 

To sum up, in any case one has the exact sequence: 
\begin{equation}\label{eq:N}
0 \to N \to W \otimes \mathcal O_V \stackrel{ev_W}{\longrightarrow} E \to 0,
\end{equation}where $N := {\rm ker} (ev_W)$ is a rank--$2$ vector bundle on $V$. Dualizing \eqref{eq:N} shows that $N^{\vee}$ is globally generated. 
Let $\sigma \in H^0(N^{\vee})$ be a general section; then the zero--locus $V(\sigma) \subset V$ is a zero--dimensional scheme of length 
$c_2(N^{\vee}) \geqslant 0$. From \eqref{eq:dualchern}, one has also $c_2(N) \geqslant 0$. 

By the exact sequence \eqref{eq:N}, the total Chern classes of $E$ and $N$ satisfy $c(E) c(N) = 1$, thus $c(N) = s(E)$, where 
$s(E)$ \color{black} is \color{black} the total Segre class of $E$ as in \S\;\ref{ChernSegre}. From \eqref{eq:segre}, one gets therefore
$ 0 \leqslant c_2(N) = s_2(E) = c_1(E)^2 - c_2(E)$.

If $ 0 < s_2(E)$, from \eqref{eq:IIE2} and the nefness of $E$ (cf. Rem.\;\ref{rem:numerical}), it follows that $n(E) - r + 1 \geqslant 2$, where $n(E)$ is as in Definition\;\ref{def:numdim2}. In such a case one has $n(E) \geqslant r+1$. By \eqref{eq:boundsE}, one therefore concludes that $n(E) = r+1 = \dim(V) + (r-1)$ which, by \eqref{eq:IIG8}, implies 
$n(E) = k(E) = r+1 = \dim (\mathbb{P}(E))$. This gives that ${\mathcal O}_{\mathbb{P}(E)} (1)$ is a big line bundle, as it follows from \eqref{eq:bigL}, so 
that $E$ is a big vector bundle. 

We want to show that, under our assumptions, the case $s_2(E) = 0$ cannot occur. \color{black} To do this, we use the same argument as in \cite[Proof of the Lemma]{Beau}. \color{black} Assume by contradiction that $s_2(E) = 0$, so also  
$c_2(N) = c_2(N^{\vee})  = 0$. This implies that $\sigma \in H^0(N^{\vee})$ general as above is no--where vanishing on $V$ giving rise to 
the exact sequence $$ 0 \to \mathcal O_V \stackrel{\cdot\;\sigma}{\longrightarrow} N^{\vee} \to \det\;N^{\vee} \cong \det\;E\to 0,$$the isomorphism on the right--side following from 
\eqref{eq:N}. The previous exact sequence shows that 
$$N^{\vee} \in {\rm Ext}^1(\det\;E, \mathcal O_V) \cong H^1((\det\;E)^{-1}) = (0),$$the latter equality following from assumptions. Therefore 
$N^{\vee} = \mathcal O_V \oplus \det\;E$, i.e. $N = \mathcal O_V \oplus (\det\;E)^{-1}$. Plugging into \eqref{eq:N} gives
$$0 \to \mathcal O_V \oplus (\det\;E)^{-1} \to W \otimes \mathcal O_V \cong \mathcal O_V ^{\oplus (r+2)} \to E \to 0$$from which one deduces 
the exact sequence 
$$0 \to (\det\;E)^{-1} \to \mathcal O_V ^{\oplus (r+1)} \to E \to 0.$$Since $h^1((\det\;E)^{-1}) = 0$, the previous exact sequence implies 
$h^0(E) \leqslant r+1$, which contradicts assumptions. 
\end{proof}

\vskip 5pt


\subsection{Examples}\label{ExSurf} We discuss some examples which satisfy assumptions in Theorem \ref{thm:surf}. 

\vskip 7pt

\noindent
$(a)$ Let $V = \mathbb{P}^2$ and consider the rank-$2$ vector bundle $E:= \mathcal O_{\mathbb{P}^2} \oplus \mathcal O_{\mathbb{P}^2}(2)$. 
The vector bundle $E$ is globally generated, with $h^0(E) = 7$ and $h^1((\det E)^{-1}) = h^1(\mathcal O_{\mathbb{P}^2}(-2)) =0$. From Theorem \ref{thm:surf}, $E$ is big. Indeed, 
$|\mathcal O_{\mathbb{P}(E)}(1)|$ maps $\mathbb{P}(E)$ in $\mathbb{P}^6$ onto the cone over the Veronese surface in $\mathbb P^5$; in particular, $E$ is big but not ample. \color{black} Another example
in the same vein is e.g. $E := \mathcal O_{\mathbb{P}^2} \oplus T_{\mathbb{P}^2}$, \color{black} where \color{black} $T_{\mathbb{P}^2}$ \color{black} is \color{black} the tangent bundle on ${\mathbb P}^2$, as it follows from the Euler sequence for $T_{\mathbb{P}^2}$ and $c_1(T_{\mathbb{P}^2}) = {\mathcal O}_{\mathbb{P}^2} (3)$.\color{black}

\vskip 7pt

\noindent
$(b)$ The previous example can be easily extended to any smooth, projective irreducible surface $V$ and any rank-$r$ vector bundle on $V$ of the form $E = \mathcal O_V \oplus F$, with $F$ any ample, rank-$(r-1)$ vector bundle such that $h^0(F) \geqslant r+1$ and $h^1((\det F)^{-1}) = 0$. For example consider $\FF_e:= \Pp(\Oc_{\Pp^1} \oplus\Oc_{\Pp^1}(-e))$ the Hirzebruch surface, for some integer $e \geqslant 0$. We let $\pi_e : \FF_e \to \Pp^1$ denote the natural projection. Thus 
${\rm Num}(\FF_e) = \mathbb{Z}[C_e] \oplus \mathbb{Z}[f]$, where $C_e$  is a section of $\FF_e$ corresponding to the surjection 
$\Oc_{\Pp^1} \oplus\Oc_{\Pp^1}(-e) \to\!\!\!\to \Oc_{\Pp^1}(-e)$ on $\Pp^1$ ($C_e$ is unique when $e >0$\color{black}), and $f = \pi^*(p)$, for any $p \in \Pp^1$, the class of a fibre;  in particular 
$C_e^2 = - e, \; f^2 = 0, \; C_ef = 1$. Let $b$ be an integer and assume $b > e \geqslant  0 \color{black}$; consider the vector bundle $E = \mathcal O_{\FF_e} \oplus \mathcal O_{\FF_e}(C_e + b f)$. 
Since $b > e$, by \cite[V.Thm.\;2.17,(b)]{H}, $\mathcal O_{\FF_e}(C_e + b f)$ is very--ample; thus $E$ is globally generated but not 
ample. Moreover $$h^0(E) = 1 + h^0(\FF_e, \mathcal O_{\FF_e}(C_e + b f)) = 1 + h^0( \Pp^1, \left(\Oc_{\Pp^1} \oplus\Oc_{\Pp^1}(-e)\right) \otimes \Oc_{\Pp^1}(b))$$the second equality following from Leray isomorphism. Since $b>e$, then 
$$h^0( \Pp^1, \left(\Oc_{\Pp^1} \oplus\Oc_{\Pp^1}(-e)\right) \otimes \Oc_{\Pp^1}(b)) = h^0 (\Pp^1, \left(\Oc_{\Pp^1} (b) \oplus\Oc_{\Pp^1}(b-e)\right) = 2b+2-e.$$Thus 
$h^0(E) = 2b+3-e >4$, as it follows by $b> e \geqslant  0 \color{black}$. Finally 
$$h^1((\det E)^{-1}) = h^1(\mathcal O_{\FF_e}(-C_e - b f)) = h^1(\omega_{\FF_e} \otimes \mathcal O_{\FF_e}(C_e + b f)) = 0$$where 
the second equality follows from Serre duality whereas the last equality from Kodaira vanishing. By Theorem \ref{thm:surf}, it follows that $E$ is big.

\vskip 7pt

\noindent
$(c)$ To discuss  examples of big (or even very-ample)  unsplitting vector bundles, we use same notation as in Example $(b)$ above and consider integers  $e \geqslant 1$, $b \geqslant 4e + 3$ and 
$\frac{3b+2-4e}{2} \leqslant k < 2b -4e$. Let $A := \mathcal O_{\FF_e} (2 C_e + (2b-k-2e) f)$ and $B := \mathcal O_{\FF_e} (C_e + (k - b + 2e) f)$ be line bundles on $\mathbb{F}_e$. Any $u \in {\rm Ext}^1(B, A)$ 
gives rise to a rank-two vector bundle $E_u$ fitting in the exact sequence 
\begin{equation}\label{eq:al-be}
0 \to A \to E_u \to B \to 0,
\end{equation}with $\det(E_u) = \mathcal O_{\FF_e} (3 C_e + b f)$. Notice that 
$$\dim({\rm Ext}^1(B,A)) = 9e+4k-6b-2 \geqslant e+2 \geqslant 3,$$where the inequalities follow \color{black} from numerical assumptions $k \geqslant \frac{3b+2-4e}{2}$ and $e \geqslant 1$. 
To show the equality, consider 
{\small $$\dim {\rm Ext}^1(B,A) = h^1(\FF_e, A \otimes B^{-1})= h^1(\FF_e, \mathcal O_{\FF_e} ( C_e + (3b-2k-4e)f)) = 
h^1(\Pp^1, (\Oc_{\Pp^1} \oplus \Oc_{\Pp^1} (-e)) \otimes \Oc_{\Pp^1} (3b-2k-4e)).$$}Since $k \geqslant \frac{3b+2-4e}{2}$ 
both $h^1( \Oc_{\Pp^1} (3b-2k-4e))$ and $h^1( \Oc_{\Pp^1} (3b-2k-5e))$ are positive and they add--up to $9e+4k-6b-2$. 

We claim that the general $u \in {\rm Ext}^1(B,A)$ gives rise to an unsplitting vector bundle. To prove this, we use that $E_u$ is of rank-two and that it fits 
in the exact sequence \eqref{eq:al-be}, thus $E_u^{\vee} \cong E_u \otimes A^{\vee} \otimes B^{\vee}$, since $\det E_u = A \otimes B$. 
Tensoring \eqref{eq:al-be} respectively by $E_u^{\vee}\cong E_u \otimes A^{\vee} \otimes B^{\vee}$,  $B^{\vee}$, $A^{\vee}$, we get the following exact diagram
\begin{equation}\label{eq:diag}
\begin{array}{rcccccc}
      &  0 &     &    0        &     & 0 & \\

      & \downarrow  &     &     \downarrow       &     & \downarrow & \\

 0 \to   &  A \otimes B^{\vee} &    \to  & E_u \otimes B^{\vee} &   \to  & \Oc_{\FF_e} & \to 0 \\

      &  \downarrow &     &       \downarrow     &     & \downarrow & \\

 \;\;\;0 \to &  E_u \otimes B^{\vee} & \to & E_u \otimes E_u^{\vee} & \to & E_u \otimes A^{\vee} & \to 0 \\

     &   \downarrow &     &       \downarrow     &     & \downarrow & \\

    0 \to  &  \Oc_{\FF_e} &      \to &     E_u \otimes A^{\vee} & \longrightarrow   & B \otimes A^{\vee} & \to 0 \\

          &     \downarrow &     &     \downarrow       &     & \downarrow & \\

          &  0 &     &           0 &     & 0 & .
\end{array}
\end{equation} One needs to compute $h^0(E_u \otimes B^{\vee})$ and $h^0(E_u \otimes A^{\vee})$. 
From the cohomology sequence associated to the first row of diagram \eqref{eq:diag} we get
$$0\to H^0(A \otimes B^{\vee} )\to H^0(E_u \otimes B^{\vee})\to H^0(\Oc_{\FF_e}) \stackrel{\widehat{\partial}}{\longrightarrow} H^1(A \otimes B^{\vee}).$$The coboundary map 
$$H^0(\Oc_{\FF_e}) \stackrel{\widehat{\partial}}{\longrightarrow} H^1(A \otimes B^{\vee}) \cong {\rm Ext}^1(B,A),$$ has to be injective 
since it corresponds to the choice of the non-trivial general extension class
$u \in {\rm Ext}^1(B,A)$ associated to $E_u$. Thus one gets
\begin{equation}\label{hOalpha}
h^0(E_u \otimes B^{\vee} )=h^0(A \otimes B^{\vee})= 
h^0(\Oc_{\Pp^1} (3b-2k-4e)) + h^0(\Oc_{\Pp^1} (3b-2k-5e)) = 0, 
\end{equation} the last equality following from the assumption $k \geqslant \frac{3b+2-4e}{2}$, which gives $3b-2k-4e \leqslant -2$ and $3b-2k-5e \leqslant -2-e$. 

From the third row of diagram \eqref{eq:diag}, since $B \otimes A^{\vee} = \mathcal O_{\FF_e} (- C_e + (2k - 3b + 4e) f)$ is not effective, it follows that 
$h^0( E_u \otimes A^{\vee})=h^0(\Oc_{\FF_e})=1$, thus $H^0( E_u \otimes A^{\vee})\cong {\mathbb C}$.

From the second column of diagram \eqref{eq:diag}, we have
$$0\to  H^0(E_u \otimes B^{\vee})\to H^0( E_u \otimes E_u^{\vee})\stackrel{\psi}{\longrightarrow} H^0(E_u \otimes A^{\vee}) \cong \mathbb{C}  \to H^1(E_u \otimes B^{\vee})\to \cdots .$$We claim that the map $\psi$ is surjective. To prove this, notice that from the first two columns of diagram \eqref{eq:diag} and the fact that the coboundary map ${\widehat{\partial} }$ is injective 
(as remarked above) we have
\[
\begin{array}{rcccccc}
   & &  0  &  &    & H^0(E_u \otimes E_u^{\vee}) & \\

      &   & \downarrow    &       &     & \downarrow^{\psi}  & \\

& 0 \to &    H^0(\Oc_{\FF_e})\ & \stackrel{\cong}{\longrightarrow}  & &H^0( E_u \otimes A^{\vee}) & \to 0 \\

     &    &            \downarrow^{\widehat{\partial}}     &  &   & \downarrow^{\tilde{\partial} }& \\

     &  &           H^1(A \otimes B^{\vee}) & \longrightarrow &   &H^1( E_u \otimes B^{\vee}) &  . \\
\end{array}
\]Since $H^0(E_u \otimes A^{\vee}) \cong \mathbb{C}$, $\psi$ is not surjective if and only if  $\psi = 0$, which is equivalent 
to ${\tilde{\partial} }$ to be injective. The latter is impossible since, from the first column of diagram \eqref{eq:diag},  we have
$$H^0(\Oc_{\FF_e})  \stackrel{\widehat{\partial}}{\longrightarrow} H^1(A \otimes B^{\vee})  \to H^1( E_u \otimes B^{\vee})$$
and the composition of the above two maps is ${\tilde{\partial} }$. From the surjectivity of $\psi$,  we conclude that
\begin{equation}\label{endom}
h^0( E_u \otimes E_u^{\vee})=h^0(E_u \otimes B^{\vee})+1. 
\end{equation}Combining \eqref{hOalpha} and \eqref{endom} we determine 
$h^0( E_u \otimes E_u^{\vee}) =1$ when $u \in {\rm Ext}^1(B,A)$  is \color{black} general. Since $E_u$ is simple, we deduce that $E_u$ must be unsplitting.

Once we have produced $E_u$ unsplitting  with arguments above, we want to show that it satisfies assumptions as in Theorem \ref{thm:surf}, so $E_u$ will be big. \color{black} 
Notice indeed \color{black} that $$h^0(E_u) \geqslant 4b - 6 e - k + 5> 6e+11  > rk (E_u) + 2; \color{black}$$the second inequality above follows 
from the assumptions $k< 2b - 4e$ and $b \geqslant 4e +3$, whereas the first inequality is a consequence of: (1) standard computations 
which show that $h^2(A) = h^j(B) = 0$, for $j \geqslant 1$,  so $h^2(E_u) = 0$;  (2) the exact sequence \eqref{eq:al-be}, from which one gets 
$$h^0(E_u) = h^0(A) + h^0(B) - h^1(A) + h^1(E_u) = \chi(A) + \chi(B) + h^1(E_u) \geqslant \chi(A) + \chi(B)$$where 
the latter can be easily computed via Riemann-Roch (left to the reader).   Furthermore, \color{black} 
$$h^1((\det E_u)^{\vee}) = h^1(\mathcal O_{\FF_e} (-3 C_e - b f)) = h^1(\omega_{\FF_e} \otimes \mathcal O_{\FF_e} (3 C_e + b f)) = 0$$where the second equality follows from Serre duality whereas the third from the Kodaira vanishing theorem and the very-ampleness of $\mathcal O_{\FF_e} (3 C_e + b f)$, as it follows from \cite[V.Cor.2.18\;(a)]{H} and from $b \geqslant 4e+3$. Since all the assumptions of Theorem \ref{thm:surf} are satisfied, \color{black} it follows that $E_u$ is a big vector bundle.

 One can even show more, namely that $E_u$ is actually \color{black} a very-ample vector bundle. 
Indeed, from \cite[V.Cor.2.18\;(a)]{H} and the assumption $k<2b-4e$, $A$ is a very-ample line bundle. Similarly, since $k\geq \frac{3b+2-4e}{2}$ and $b \geqslant 4e+3$, from \cite[V.Thm.2.17\;(c)]{H} it follows that $B$ is also very-ample. Thus $A \oplus B$ (which corresponds to the zero-vector in ${\rm Ext}^1(B,A)$) is a very-ample vector bundle of rank $2$. 
Since very-ampleness is an open condition, one deduces that $E_u$ is very-ample, when $u \in {\rm Ext}^1(B,A)$ is general. \color{black}

\vskip 5pt

\noindent
$(d)$ Examples of big \color{black} vector bundles with rank higher than two can be easily constructed as follows. Using same notation and assumptions as in (c) above, 
let $E_u$ be as in \eqref{eq:al-be}, corresponding to the general $u \in {\rm Ext}^1(B,A)$. From Example (c), $E_u$ satisfies assumptions of Theorem \ref{thm:surf} and it is also very-ample. 
For any integer $r \geqslant 3$, the vector bundle $\mathcal E_u := \mathcal O_V^{r-2} \oplus E_u$ is of rank 
$r \geqslant 3$, it is globally generated, with $h^0( \mathcal E_u) \geqslant r + e + 9$ and  
$h^1((\det \mathcal E_u)^{-1}) = h^1((\det E_u)^{-1}) =0$ (as it follows from computations in Example (c)). Then, $\mathcal E_u $ is big but not ample. 

Further examples of big (resp., ample or very-ample) vector bundles of rank higher than two can be easily constructed by iterating extension procedure as in Example (c), 
starting from $E_u$ as in (c) and its extension via a globally generated and big (resp., ample or very-ample) line bundle $L$.  

\vskip 5pt

\noindent
$(e)$ We include here an example which shows that condition $h^1((\det E)^{-1}) =0$ in 
Theorem \ref{thm:surf} is actually sufficient but not necessary for bigness.   In other words, we show an example of a smooth projective variety $V$ and a big vector bundle $E$ on it such that  $h^1((\det E)^{-1})  \neq 0$. More specifically, let $V$ be a smooth projective surface that is irregular, i.e., $q(V) \neq 0$. Fix an ample line bundle $A$ on $V$ and consider the rank two vector bundle on $V$ which is defined as follows, namely $U:= {\mathcal O}_V \oplus \left(A^{-1} \otimes A^{-1}\right)$. By definition, $U$ has global sections because $H^0(V,U) =H^0(V, {\mathcal O}_V) \neq 0$. Next, define the rank two vector bundle $E: = U \otimes A$. As explained in \cite[Example 6.1.23, p. 18]{Laz2}, the vector bundle $E$ is big (with notation as therein in this case $m=1$, i.e. the first symmetric power 
$S^m(U)$ for which one has effectivity is for $S^1 U = U$, and $L =A$ is ample).  Let us compute the determinant $\det(E)= \det(U \otimes A)$, which turns out to equal $\det(U) \otimes A^{\otimes 2} \cong \mathcal O_V$. Therefore 
$H^1(V, \det(E)^{-1})= H^1(V, {\mathcal O}_V) \neq \{0\}$;  indeed the dimension of $H^1(V, {\mathcal O}_V)$ is given by $q(V)$, which is different from $0$ by our assumptions on $V$. 
\color{black}


\section{The four--fold case}\label{4fold} Here we focus on the case $d=4$, determining sufficient conditions for bigness of rank $r\geqslant 2$ vector bundles on a 
four-fold $V$. Preliminarly, consider the following general fact; let $V$ be any smooth, projective variety and let $E$ be a globally--generated, rank--$r$ vector bundle on 
$V$, $r \geqslant 2$. Recall that global generation of $E$ gives rise to the exact sequence \eqref{eq:mukailaz}; tensoring with $E$ and passing to cohomology, 
one has the natural induced map in \eqref{eq:muE} and it is straightforward to observe that 
\begin{equation}\label{eq:injectiveaccazero}
\mu_E \; \mbox{is injective} \; \Leftrightarrow \; h^0(M_E \otimes E) =0.
\end{equation}

With this set-up, we prove the following:

\begin{theorem}\label{thm:4fold} Let $V$ be any smooth, irreducible projective four-fold. 
Let $E$ be a globally--generated, rank--$r$ vector bundle on $V$, $r \geqslant 2$, such that $h^0(E) \geqslant r+4$. 
Assume further that: 
\begin{eqnarray}
\label{eq:4fold}
q(V) := h^1(\mathcal O_V) & = & 0, \nonumber \\
h^i(V, (\det E)^{-1}) & = &  0, \; 1 \leqslant i \leqslant 3, \\
h^3( V, E^{\vee} \otimes (\det E)^{-1}) & = &  0, \nonumber\\
\mu_E \;\; {\rm is} \;\; {\rm injective}. & &\nonumber
\end{eqnarray}
Then $E$ is a big vector bundle on $V$.
\end{theorem}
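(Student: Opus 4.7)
The plan is to mimic the argument of Theorem \ref{thm:surf} while inserting an extra round of iteration forced by passing from dimension $2$ to dimension $4$. First I would choose a general $(r+4)$-dimensional subspace $W \subseteq H^0(E)$ so that $ev_W : W \otimes \mathcal O_V \to E$ remains surjective (via the same incidence argument between a general linear centre and the image of $\mathbb P(E) \to \mathbb P(H^0(E))$ that handled the case $h^0(E) > r+4$ in the proof of Theorem \ref{thm:surf}), and form
\[
0 \to N \to W \otimes \mathcal O_V \stackrel{ev_W}{\longrightarrow} E \to 0,
\]
with $N$ a rank-$4$ vector bundle and $\det N = (\det E)^{-1}$. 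The identity $c(N)\,c(E) = 1$ yields $c_4(N) = c_4(N^\vee) = s_4(E)$; if $s_4(E) > 0$, then \eqref{eq:IIE2} and \eqref{eq:IIG8} force $n(E) = d + r - 1 = r + 3$, so $E$ is big. It therefore remains only to rule out $s_4(E) = 0$.

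Suppose for contradiction $s_4(E) = 0$. Dualising exhibits $N^\vee$ as globally generated, so a general section $\sigma \in H^0(N^\vee)$ is nowhere vanishing (its zero locus has length $c_4(N^\vee) = 0$), and the Koszul complex of $\sigma$ is a locally free resolution
\[
0 \to (\det E)^{-1} \to N^\vee \otimes (\det E)^{-1} \to \wedge^2 N \to N \to \mathcal O_V \to 0,
\]
whose basic building block is $0 \to \mathcal O_V \to N^\vee \to Q \to 0$, with $Q$ of rank $3$ and $\det Q = \det E$. Next I would iterate: $c_3(Q) = -s_3(E)$, and both $c_3(Q)$ and $s_3(E)$ are Schur-positive polynomials in the Chern roots of the nef bundles $Q$ and $E$; by Fulton--Lazarsfeld positivity they are numerically semipositive codim-$3$ classes on the $4$-fold $V$, so, summing to $0$, they are both numerically trivial. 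Hence the general section of $Q$ is again nowhere vanishing, producing $0 \to \mathcal O_V \to Q \to Q' \to 0$ with $Q'$ of rank $2$ and $\det Q' = \det E$. Using $h^1(\mathcal O_V) = 0$ to split the intermediate $\mathcal O_V$-by-$\mathcal O_V$ extension inside $N^\vee$, this assembles into
\[
0 \to \mathcal O_V^{\oplus 2} \to N^\vee \to Q' \to 0.
\]

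I expect the hardest step to be pushing one further level down, i.e.\ showing that $c_2(Q') = 0$ and that all the resulting extensions split, giving $Q' \cong \mathcal O_V \oplus \det E$ and then $N^\vee \cong \mathcal O_V^{\oplus 3} \oplus \det E$. The positivity argument of the previous paragraph is no longer enough by itself, since $c_2(Q') = s_2(E)$ and both sides are only known to be semipositive. This is where the remaining hypotheses must enter decisively: the injectivity of $\mu_E$, which via \eqref{eq:injectiveaccazero} and a snake-lemma comparison $N \hookrightarrow M_E$ yields $h^0(N \otimes E) = 0$; the vanishings $h^i((\det E)^{-1}) = 0$ for $i = 1,2,3$, which make the cohomology sequence of $0 \to (\det E)^{-1 \oplus 2} \to N^\vee \otimes (\det E)^{-1} \to Q'^\vee \to 0$ identify the low-degree cohomology of $Q'^\vee = Q' \otimes (\det E)^{-1}$ with that of $N^\vee \otimes (\det E)^{-1}$; and $h^3(E^\vee \otimes (\det E)^{-1}) = 0$, which, combined with the dual of the defining sequence twisted by $(\det E)^{-1}$, kills $H^2(N^\vee \otimes (\det E)^{-1})$ and hence $H^2(Q'^\vee)$. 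Chasing these together through the Koszul resolution above and its twists by $E$ and by $(\det E)^{-1}$ should force a nowhere-vanishing section of $Q'$ and the splitting of every remaining extension, so that $N^\vee \cong \mathcal O_V^{\oplus 3} \oplus \det E$ and therefore $N \cong \mathcal O_V^{\oplus 3} \oplus (\det E)^{-1}$. Substituting into $0 \to N \to W \otimes \mathcal O_V \to E \to 0$ and peeling off the three trivial summands produces $0 \to (\det E)^{-1} \to \mathcal O_V^{\oplus (r+1)} \to E \to 0$; its cohomology sequence, combined with $h^1((\det E)^{-1}) = 0$, gives $h^0(E) \leqslant r + 1$, contradicting the hypothesis $h^0(E) \geqslant r + 4$.
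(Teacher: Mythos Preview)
Your setup and the reduction to $s_4(E)=0$ match the paper's, and the Fulton--Lazarsfeld manoeuvre at the $c_3$ level is a nice idea that the paper does not use: since $Q$ and $E$ are nef, both $c_3(Q)$ and $s_3(E)$ are Schur classes, hence non-negative on every $3$-dimensional subvariety of $V$; combined with $c_3(Q)=-s_3(E)$ this forces $c_3(Q)$ to be a numerically trivial $1$-cycle, and an effective curve with trivial numerical class on a projective variety is empty. So the passage from $Q$ down to $Q'$ is sound.

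The gap is the next level, and it is not just a matter of filling in details. Your ultimate target $N^\vee\cong\mathcal O_V^{\oplus 3}\oplus\det E$ would force $s_2(E)=c_2(N)=0$, which is not implied by the hypotheses together with $s_4(E)=0$. More concretely, the splitting you need at the $Q'$ stage would require $h^1(Q'^{\,\vee})=0$, but this is \emph{never} true here: dualising $0\to\mathcal O_V\to Q\to Q'\to 0$ gives $0\to Q'^{\,\vee}\to F^\vee\to\mathcal O_V\to 0$ (with $F:=Q$), and since $H^0(F^\vee)\subset H^0(N)=0$ (because $W\hookrightarrow H^0(E)$), the connecting map $H^0(\mathcal O_V)\to H^1(Q'^{\,\vee})$ is injective, so $h^1(Q'^{\,\vee})\geqslant 1$. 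The vanishings you do extract, $h^0(N\otimes E)=0$ and $h^2(Q'^{\,\vee})=0$, are correct but neither produces a nowhere-vanishing section of $Q'$ nor the splittings you need.

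The paper instead stops after peeling off a single $\mathcal O_V$: with $F$ as above it proves $h^1(F^\vee)=0$ directly, which gives $N\cong\mathcal O_V\oplus F^\vee$ and hence $h^0(E)\leqslant r+3$, already a contradiction. The vanishing $h^1(F^\vee)=0$ is obtained from $0\to\bigwedge^2 F^\vee\to\bigwedge^2 N\to F^\vee\to 0$: one needs $H^2(\bigwedge^2 F^\vee)=0$ (this is exactly your chase ending in $h^3(E^\vee\otimes(\det E)^{-1})=0$, via $\bigwedge^2 F^\vee\cong F\otimes(\det E)^{-1}$) and $H^1(\bigwedge^2 N)=0$. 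The latter is where the real work lies: it uses the $\bigwedge^2$-filtration of $0\to N\to W\otimes\mathcal O_V\to E\to 0$, the assumption $q(V)=0$, and the injectivity of $\mu_E$ (yielding both $h^0(N\otimes E)=0$ and the injectivity of $\bigwedge^2 W\to H^0(\bigwedge^2 E)$) to show that a certain map $H^1(G^1)\to H^1(N\otimes E)$ is injective, whence $H^1(\bigwedge^2 N)=0$. Your outline never reaches this mechanism.
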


\begin{proof} Reasoning similarly as in the proof of Theorem \ref{thm:surf}, 
for general $W \subseteq H^0(E)$ of dimension $r+4$, one has the exact sequence 
\begin{equation}\label{eq:N4fold}
0 \to N \to W \otimes \mathcal O_V \stackrel{ev_W}{\longrightarrow} E \to 0,
\end{equation}where $N$ is a vector bundle of rank $4$.  Dualizing  \eqref{eq:N4fold} shows that $N^{\vee}$ is globally generated.
 
Let $\sigma \in H^0(N^{\vee})$ be a general section; then the zero--locus $V(\sigma) \subset V$ is a zero--dimensional scheme of length 
$ 0 \leqslant c_4(N^{\vee}) = c_4(N)$,  the equality following from \eqref{eq:dualchern}. Thus, from \eqref{eq:segre}, one gets 
$0 \leqslant c_4(N) = s_4(E)$, where $s_4(E)$ is \color{black} as in \eqref{eq:IID4}--\eqref{eq:segre}.

If $ 0 < s_4(E)$, \eqref{eq:IIE2} and the nefness of $E$ (cf. Rem.\;\ref{rem:numerical}) give $n(E) - r + 1 \geqslant 4$, i.e $n(E) \geqslant r+3$. In such a case, from \eqref{eq:boundsE} one concludes that $n(E) = r+3 = \dim(V) + (r-1)$, which implies that $E$ is a big vector bundle. 

One is therefore left to show that, under assumptions \eqref{eq:4fold}, the case $s_4(E) = 0$ cannot occur. Assume by contradiction that $s_4(E) = 0$, so $c_4(N) = c_4(N^{\vee}) = 0$. 
This implies that $\sigma \in H^0(N^{\vee})$ general as above is no--where vanishing on $V$, giving rise to the exact sequence
\begin{equation}\label{eq:sigma}
0 \to \mathcal O_V \stackrel{\cdot\;\sigma}{\longrightarrow} N^{\vee} \to F \to 0,
\end{equation}where $F$ is a rank-$3$ vector bundle. Dualizing \eqref{eq:sigma}, one gets
\begin{equation}\label{eq:i}
0 \to F^{\vee}\to N \to \mathcal O_V \to 0, 
\end{equation}i.e. $N \in {\rm Ext}^1(\mathcal O_V, F^{\vee}) \cong H^1(F^{\vee})$. If we show that $h^1(F^{\vee}) = 0$, then 
$N = \mathcal O_V \oplus F^{\vee}$ which, plugged into \eqref{eq:N}, gives 
$$0 \to \mathcal O_V \oplus F^{\vee} \to W \otimes \mathcal O_V \cong \mathcal O_V ^{\oplus (r+4)} \to E \to 0$$from which one deduces 
$$0 \to F^{\vee} \to \mathcal O_V ^{\oplus (r+3)} \to E \to 0.$$Condition $h^1(F^{\vee}) = 0$ would therefore imply $h^0(E) \leqslant r+3$, contradicting the assumptions.
 
The rest of the proof is therefore concerned to showing that conditions in \eqref{eq:4fold} guarantee $h^1(F^{\vee}) = 0$. To do this, consider   
\begin{equation}\label{eq:notegil11} 
0 \to \bigwedge^2 F^{\vee} \stackrel{\alpha_2}{\longrightarrow} \bigwedge^2 N \stackrel{\beta_2}{\longrightarrow} F^{\vee} \to 0,
\end{equation} deduced from \eqref{eq:i} and \cite[II.5,\;Ex.\;5.16(d),\;p.127]{H}. Then \eqref{eq:notegil11} gives:   
\begin{equation}\label{eq:condequiv} 
h^1(F^{\vee}) =0 \; \Leftrightarrow \; \left\{\begin{array}{ll}
H^1(\bigwedge^2 F^{\vee}) \stackrel{H^1(\alpha_2)}{\longrightarrow} H^1(\bigwedge^2 N) & {\rm surjective, \; and} \\
H^2(\bigwedge^2 F^{\vee}) \stackrel{H^2(\alpha_2)}{\longrightarrow} H^2(\bigwedge^2 N) & {\rm injective}. 
\end{array}\right.
\end{equation}

We first show the injectivity of the map $H^2(\alpha_2)$. Since $F^{\vee}$ is of rank $3$, from \cite[II.5,\;Ex.\;5.16(d),\;p.127]{H}, one has 
$$\bigwedge^2 F^{\vee} \cong F \otimes (\det F^{\vee}) = F \otimes (\det F)^{-1}.$$ Moreover \eqref{eq:i} gives 
$(\det F)^{-1}  \cong \det N$ whereas \eqref{eq:N4fold} gives $\det N \cong (\det E)^{-1}$, i.e. $\det F \cong \det E$. Thus, the previous isomorphism reads \color{black}
$\bigwedge^2 F^{\vee} \cong F \otimes (\det E)^{-1}$, 
so the map $H^2(\alpha_2)$ reads  
$H^2(F \otimes (\det E)^{-1}) \stackrel{H^2(\alpha_2)}{\longrightarrow} H^2(\bigwedge^2 N)$. Tensoring \eqref{eq:sigma} by $(\det E)^{-1}$ gives: 
\begin{equation}\label{eq:notegil15}
0 \to (\det E)^{-1} \to N^{\vee} \otimes (\det E)^{-1} \to F \otimes (\det E)^{-1} \to 0.
\end{equation}Since, from \eqref{eq:4fold} we have $h^i((\det E)^{-1}) = 0$ for $1 \leqslant i \leqslant 3$,   
\eqref{eq:notegil15} gives $$H^i(N^{\vee} \otimes (\det E)^{-1}) \cong H^i(F \otimes (\det E)^{-1}), \;\; 1 \leqslant i \leqslant 2.$$Dualizing \eqref{eq:N4fold} and tensoring 
with $(\det E)^{-1}$ gives: 
$$0 \to E^{\vee} \otimes (\det E)^{-1} \to W^{\vee} \otimes  (\det E)^{-1} \to N^{\vee} \otimes (\det E)^{-1}\to 0.$$

Since 
$h^2( (\det E)^{-1}) = h^3(E^{\vee} \otimes (\det E)^{-1}) =0$ from  \eqref{eq:4fold}, the previous exact sequence gives 
\linebreak $h^2(N^{\vee} \otimes (\det E)^{-1}) = 0$ which from the isomorphism above implies $h^2(F \otimes (\det E)^{-1}) =0$, proving the injectivity of 
$H^2(\alpha_2)$.  

Concerning the surjectivity of $H^1(\alpha_2)$, consider the exact sequence \eqref{eq:N4fold}. From \cite[II.5,\;Ex.\;5.16(d),\;p.127]{H}, \eqref{eq:N4fold} gives rise to 
a filtration $$0 \subset G^2 \subset G^1 \subset G^0 = \bigwedge^2 W \otimes \mathcal O_V \cong \mathcal O_V^{\oplus {{r+4} \choose {2}}},$$where 
$$G^2 \cong \bigwedge^2 N, \; G^1/G^2 \cong N \otimes E, \; G^0/G^1 \cong \bigwedge^2 E.$$In other words, from \eqref{eq:N4fold} one deduces 
the following exact sequences:  
\begin{equation}\label{eq:notegil23}
0 \to \bigwedge^2 N \to G^1 \to N \otimes E \to 0 \;\;\;\; {\rm and} \;\;\;\; 0 \to G^1 \to \mathcal \bigwedge^2 W \otimes \mathcal O_V \to \bigwedge^2 E \to 0.
\end{equation}

Passing to cohomology in the second exact sequence in \eqref{eq:notegil23} and using assumption $q(V) = h^1(\mathcal O_V) =0$, we get 
\begin{equation}\label{eq:Flam2mag}
0 \to H^0(G^1) \to \bigwedge^2 W \stackrel{{\lambda_E}_|}{\longrightarrow} H^0(\bigwedge^2 E) \stackrel{\pi}{\to} H^1(G^1) \to 0.
\end{equation}

\begin{claim}\label{cl:Flam2mag} The map ${\lambda_E}_|$ in \eqref{eq:Flam2mag} is injective. 
In particular, one has $$H^0(G^1) = 0\;\;\; {\rm and} \;\;\; H^1(G^1) \cong \frac{H^0(\bigwedge^2 E)}{\bigwedge^2 W}.$$
\end{claim}

\begin{proof}[Proof of Claim \ref{cl:Flam2mag}] Consider the map $\mu_E : H^0(E)^{\otimes 2} \to H^0(E^{\otimes 2})$ as in \eqref{eq:muE}. 
On the one hand, one has 
$$H^0(E)^{\otimes 2} = \bigwedge^2 H^0(E) \oplus Sym^2(H^0(E))$$and the map $\mu_E$ then splits as 
$\mu_E = \lambda_E \oplus \sigma_E$, where 
$$\lambda_E := {\mu_E}_{|_{\bigwedge^2 H^0(E)}}: \bigwedge^2 H^0(E) \to H^0(E^{\otimes 2}) \;\;\; {\rm and} \;\;\; \sigma_E := {\mu_E}_{|_{Sym^2(H^0(E))}}: Sym^2(H^0(E)) \to H^0(E^{\otimes 2}).$$On the other hand, 
since $E^{\otimes 2} = \bigwedge^2 E \oplus Sym^2(E)$, then 
$$H^0(E^{\otimes 2}) = H^0(\bigwedge^2 E) \oplus H^0(Sym^2(E));$$therefore, more precisely one has 
$$\lambda_E : \bigwedge^2 H^0(E) \to H^0(\bigwedge^2 E) \;\;\; {\rm and} \;\;\;\sigma_E : Sym^2(H^0(E)) \to H^0(Sym^2(E)).$$

By assumption \eqref{eq:4fold}, the map $\mu_E$ is injective, so $\lambda_E$ is also injective. Since $W \subseteq H^0(E)$, then $\bigwedge^2 W \subseteq \bigwedge^2 H^0(E)$ and the map 
${\lambda_E}_|$ in \eqref{eq:Flam2mag} is nothing but the restriction of $\lambda_E$ to $\bigwedge^2 W$, proving the injectivity of ${\lambda_E}_|$. The rest of the claim easily follows from 
\eqref{eq:Flam2mag}. 
\end{proof}

From the first exact sequence in \eqref{eq:notegil23}, one gets
{\small
\begin{equation}\label{eq:help}
 0 \to H^0(\bigwedge^2 N) \stackrel{\gamma_1}{\longrightarrow} H^0(G^1) \stackrel{\gamma_2}{\longrightarrow} H^0(N \otimes E) \stackrel{\gamma_3}{\longrightarrow} H^1(\bigwedge^2 N) 
\stackrel{\gamma_4}{\longrightarrow} H^1(G^1) \stackrel{\gamma_5}{\longrightarrow} H^1(N\otimes E) \to \cdots.
\end{equation}}From Claim \ref{cl:Flam2mag}, \eqref{eq:help} reduces to 
$$ 0 \to H^0(N \otimes E) \stackrel{\gamma_3}{\longrightarrow} H^1(\bigwedge^2 N) 
\stackrel{\gamma_4}{\longrightarrow} H^1(G^1) \stackrel{\gamma_5}{\longrightarrow} H^1(N\otimes E) \to \cdots;$$on the other hand, tensoring 
\eqref{eq:N4fold} by $E$ and passing to cohomology, one gets also: 
\begin{equation}\label{eq:Flam2termag}
0 \to H^0(N \otimes E) \to W \otimes H^0(E) \stackrel{{\mu_E}_|}{\longrightarrow} H^0(E \otimes E) \stackrel{\psi}{\longrightarrow} H^1(N\otimes E) \to \cdots,
\end{equation}where 
${\mu_E}_| := {\mu_E}_{|_{W \otimes H^0(E)}}$ as $W \otimes H^0(E) \subseteq H^0(E)^{\otimes 2}$. Since $\mu_E$ is injective by assumptions in \eqref{eq:4fold}, 
${\mu_E}_|$ is injective too. Therefore one has $h^0(N \otimes E) = 0$, which reduces 
\eqref{eq:help}  and \eqref{eq:Flam2termag}, respectively, to 
\begin{equation}\label{eq:Flam2quatermag}
H^1(\bigwedge^2 N) \stackrel{\gamma_4}{\hookrightarrow} H^1(G^1) \stackrel{\gamma_5}{\longrightarrow} 
H^1(N\otimes E) \to \cdots \;\; {\rm and} \;\; 
W \otimes H^0(E) \stackrel{{\mu_E}_|}{\hookrightarrow} H^0(E \otimes E) \stackrel{\psi}{\longrightarrow} H^1(N\otimes E) \to \cdots.
\end{equation}

\begin{claim}\label{cl:Flam2bismag} The map $\gamma_5$ is injective.
\end{claim}

\begin{proof}[Proof of Claim \ref{cl:Flam2bismag}] From Claim \ref{cl:Flam2mag}, the map $\pi$ in \eqref{eq:Flam2mag} induces an isomorphism 
$$\frac{H^0(\bigwedge^2 E)}{{\rm Im} ({\lambda_E}_|)} = \frac{H^0(\bigwedge^2 E)}{\bigwedge^2 W} \stackrel{\overline{\pi} \; \cong}{\longrightarrow} H^1(G^1).$$Composing with 
$\gamma_5$, one gets 
$$\frac{H^0(\bigwedge^2 E)}{{\rm Im} ({\lambda_E}_|)} = \frac{H^0(\bigwedge^2 E)}{\bigwedge^2 W} \stackrel{\gamma_5 \circ \overline{\pi}}{\longrightarrow} H^1(N\otimes E)$$which 
is compatible with the injection 
$$\frac{H^0(E^{\otimes 2})}{{\rm Im} ({\mu_E}_|)} = \frac{H^0(E^{\otimes 2})}{W \otimes H^0(E)} \stackrel{\overline{\psi}}{\hookrightarrow} H^1(N\otimes E)$$induced by $\psi$ as in \eqref{eq:Flam2quatermag}. Since $W \subseteq H^0(E)$, write $H^0(E) = W \oplus U$ when $W \subsetneq H^0(E)$. Therefore 
$$W \otimes H^0(E) = W^{\otimes 2} \oplus W \otimes U = \bigwedge^2 W \oplus Sym^2(W) \oplus W \otimes U.$$Since 
$H^0(E^{\otimes 2}) = H^0 (\bigwedge^2 E) \oplus H^0(Sym^2(E))$ and $\mu_E$, $\lambda_E$ and $\sigma_E$ are injective, 
then $$ \frac{H^0(E^{\otimes 2})}{W \otimes H^0(E)} = \frac{H^0(\bigwedge^2 E)}{\bigwedge^2 W \oplus \left((W\otimes U) \cap \bigwedge^2 H^0(E)\right)} \oplus 
\frac{H^0(Sym^2(E))}{Sym^2(W) \oplus \left((W\otimes U) \cap Sym^2(H^0(E))\right)}.$$ The injectivity of $\overline{\psi}$ implies the injectivity of its restriction 
$$ \frac{H^0(\bigwedge^2 E)}{\bigwedge^2 W \oplus \left((W\otimes U) \cap \bigwedge^2 H^0(E)\right)} \stackrel{\overline{\psi}_|}{\hookrightarrow} H^1(N\otimes E).$$ If we prove that $\overline{\psi}_{|}$ coincides with $\gamma_5 \circ \overline{\pi}$, then $\gamma_5 \circ \overline{\pi}$ is therefore injective, so is $\gamma_5$. For these purposes, it suffices to show that $\bigwedge^2 W \oplus \left((W\otimes U) \cap \bigwedge^2 H^0(E)\right)$ is equal to $\bigwedge^2 W$. 

To do this, observe that in $H^0(E)\otimes H^0(E)$ the elements of $\bigwedge^2 H^0(E)$ correspond to skew-symmetric matrices with $h^0(E)$ rows and $h^0(E)$ columns. 
Since, as above,  $H^0(E) = W \oplus U$, such skew-symmetric matrices have the following type, namely:
\begin{equation}
\label{eq:tensormatrix1}
\left(
\begin{array}{cc}
A & B \\
C & D
\end{array}
\right),
\end{equation}
where $A^T=-A, D^T=-D, B^T+C=0$ and where $A$ is a square matrix with $r+4$ rows and $D$ is a square matrix with $h^0(E)-r-4$ rows. 

Let us now describe the elements of $W \otimes H^0(E)$ in $H^0(E) \otimes H^0(E)$; these correspond to square matrices of the following form, namely:
\begin{equation}
\label{eq:tensormatrix2}
\left(
\begin{array}{cc}
L & 0 \\
M & 0
\end{array}
\right),
\end{equation}
where $L$ is a square matrix with $r+4$ rows  and where $M$ is a matrix with $h^0(E)-r-4$ rows and $r+4$ columns. By \eqref{eq:tensormatrix1} and \eqref{eq:tensormatrix2}, 
an element in $\bigwedge^2 W \oplus \left((W\otimes U) \cap \bigwedge^2 H^0(E)\right)$ has to be a skew-symmetric matrix in $M_{r+4}({\mathbb C})$, namely an element in $\bigwedge^2W$. 
This implies that $\left((W\otimes U) \cap \bigwedge^2 H^0(E)\right) = \{0\}$, as claimed. 

Therefore, the map $\overline{\psi}_{|}$ coincides with the map $\gamma_5 \circ \overline{\pi}$, which is therefore injective. Since $\overline{\pi}$ is an isomorphism, one has that $\gamma_5$ is injective. 
\end{proof}

Claim \ref{cl:Flam2bismag} and \eqref{eq:Flam2quatermag} give $H^1(\bigwedge^2 N) = (0)$, proving the surjectivity of $H^1(\alpha_2)$ as in \eqref{eq:condequiv}, which 
completes the proof of Theorem \ref{thm:4fold}.  
\end{proof}


\subsection{Consequences and examples}\label{Ex4fold} In this section we discuss some direct consequences of Theorem \ref{thm:4fold}, showing how this main result can be related to several aspects like 
Fano varieties, Lazarsfeld-Mukai bundles, etcetera.

\vskip 7 pt

\noindent
(a) To start with, let $V$ be any smooth, projective variety and denote by $T_V$ its tangent bundle. As proved, for instance, in \cite[Proposition\;4.1]{hsiao}, if $T_V$ is nef and big, then $V$ is a Fano manifold, i.e., $\det(T_V)=-K_V$ \color{black}is ample. In loc. cit., the author poses Question 4.5., i.e., $$\mbox{\it If $V$ is Fano with nef tangent bundle $T_V$, is it true that $T_V$ is big?}.$$As explained in \cite[p.\,1550098-8]{hsiao}, the affirmative answer to the previous question has been proved up to dimension $3$. 

Theorem \ref{thm:4fold} allows us to answer this question in dimension four in some cases. More precisely, the following holds.

\begin{proposition}\label{prop:bignesstangent} Let $V$ be a smooth projective four-fold. Assume $V$ is a Fano manifold and $T_V$ is globally generated with $h^0(T_V) \geqslant \; 9$. Then $T_V$ is a big vector bundle on $V$.
\end{proposition}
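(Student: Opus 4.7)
My plan is to apply Theorem \ref{thm:4fold} to $E = T_V$ and verify, one by one, the five hypotheses it requires. The numerical condition is immediate: $T_V$ has rank $r=4$, so the assumed $h^0(T_V) \geq 9$ comfortably exceeds $r+4 = 8$, and global generation is given.

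Three of the four cohomological conditions follow from the Fano assumption alone. Since $-K_V$ is ample, Kodaira vanishing yields $h^i(\mathcal{O}_V) = 0$ for every $i \geq 1$, so in particular $q(V) = 0$. Because $(\det T_V)^{-1} = K_V$, Serre duality then produces $h^i(K_V) = h^{4-i}(\mathcal{O}_V)^{*} = 0$ for $1 \leq i \leq 3$, handling the second block of hypotheses.

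For the remaining vanishing $h^3(T_V^{\vee} \otimes K_V) = 0$, I would rewrite it via Serre duality as $h^1(T_V) = 0$ and then exploit the fact that global generation of $T_V$ has strong global consequences: via the identification $H^0(T_V) \cong \mathrm{Lie}(\mathrm{Aut}^0(V))$, surjectivity of the evaluation map at every point forces $\mathrm{Aut}^0(V)$ to act with full-dimensional (hence open, hence transitive) orbits on $V$. Combined with $V$ being Fano, this means $V \cong G/P$ for $G$ semisimple and $P$ parabolic, and on such rational homogeneous varieties Borel--Weil--Bott yields $h^i(T_V) = 0$ for all $i \geq 1$; in particular $h^1(T_V) = 0$, which is what is needed.

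The main obstacle I anticipate is the injectivity of the multiplication map $\mu_{T_V} : H^0(T_V)^{\otimes 2} \to H^0(T_V^{\otimes 2})$, equivalently the vanishing $h^0(M_{T_V} \otimes T_V) = 0$. Homogeneity of $V$ alone is not enough to guarantee this---naive dimension counts on simple models such as $\mathbb{P}^4$ or $(\mathbb{P}^1)^4$ already show that $\mu_{T_V}$ can fail to be injective---so I would secure it by a direct Borel--Weil--Bott analysis on the specific $G/P$ produced in the previous step, decomposing $T_V^{\otimes 2}$ into its irreducible homogeneous summands and bounding $h^0(T_V^{\otimes 2})$ against $h^0(T_V)^{\otimes 2}$, or else build this injectivity into the hypothesis. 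Once $\mu_{T_V}$ is known to be injective, Theorem \ref{thm:4fold} applied to $E = T_V$ delivers the bigness of $T_V$ at once.
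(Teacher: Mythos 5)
Your verification of the first three blocks of hypotheses in \eqref{eq:4fold} is essentially the paper's own argument: $q(V)=0$ and $h^i(K_V)=0$ for $1\leqslant i\leqslant 3$ from the Fano condition via Kodaira vanishing and Serre duality, and $h^3(T_V^{\vee}\otimes K_V)=h^1(T_V)^{\vee}=0$ by observing that global generation of $T_V$ forces $V$ to be homogeneous, that Fano-ness kills the abelian factor, and that $H^1(G/P,T_{G/P})=0$ (the paper cites Mu\~noz et al.\ and Snow for exactly these two facts). Up to that point you track the paper's proof exactly and correctly.

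The gap is the last hypothesis: you never establish the injectivity of $\mu_{T_V}$, offering only a plan (``a direct Borel--Weil--Bott analysis \dots or else build this injectivity into the hypothesis''), so as submitted the proof is incomplete. However, your hesitation here is better founded than you may realize. The paper closes this step by claiming that a nonzero section of $M_{T_V}\otimes T_V$ gives an \emph{injective} bundle map $M_{T_V}^{\vee}\hookrightarrow T_V$, whence $\mathrm{rk}(M_{T_V})\leqslant 4$, contradicting $\mathrm{rk}(M_{T_V})=h^0(T_V)-4\geqslant 5$. But a nonzero homomorphism of vector bundles need not be injective: its image has rank at most $4$, so when $\mathrm{rk}(M_{T_V}^{\vee})>4$ it automatically acquires a kernel and no contradiction arises. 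Your test cases confirm that the hypothesis genuinely fails: on $\mathbb{P}^4$ the Euler sequence gives $h^0(T_V\otimes T_V)=326$ against $h^0(T_V)^2=576$, so $\ker\mu_{T_V}$ has dimension at least $250$; on $(\mathbb{P}^1)^4$ one gets $128<144$. Thus the injectivity of $\mu_{T_V}$ cannot be deduced in the stated generality, and neither your proposal nor the paper's argument actually completes this step; the Proposition itself is presumably still true (e.g.\ $T_{\mathbb{P}^4}$ is ample, and $s_4(T_{(\mathbb{P}^1)^4})=16>0$), but a correct proof must either bypass Theorem \ref{thm:4fold} (say, via the classification of rational homogeneous four-folds or a direct computation of $s_4(T_V)$) or add the injectivity of $\mu_{T_V}$ as an explicit hypothesis, as you suggest.
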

\begin{proof} To prove the proposition, it suffices to verify conditions \eqref{eq:4fold}. 

Since $V$ is Fano, then $h^{p,0}(V)=0$ for $p \neq 0$, hence $q(V)=0$. Moreover, the following holds:
$$
H^i(V, \det(T_V)^{-1} \color{black})=H^i\left(V, K_V\right)\simeq H^{4-i}\left(V, {\mathcal O}_V \right)^{\vee}=\{0\}.
$$

As for $h^3\left(V, E^{\vee} \otimes \det(E)^{-1}\right)=0$,  with $E = T_V$, \color{black} we have
$$
H^3\left(V, T_V^{\vee} \otimes K_V \right) \simeq H^1\left(V, T_V\right)^{\vee}.
$$We want to show that this is zero. By \cite[Proposition\;2.1]{occ}, $V$ is a homogeneous variety as its tangent bundle $T_V$ is globally generated. As recalled, for instance, in loc. cit., Theorem 2.2., any homogeneous manifold is isomorphic to the product $A \times Y_1 \times \ldots \times Y_k$, where $A$ is an abelian variety whreas $Y_i$ is a rational homogeneous manifold, i.e., the quotient of a simple Lie group $G_i$ by \color{black} a parabolic subgroup $P_i$. Since $V$ is Fano, so $V$ is isomorphic to a product of rational homogeneous manifolds; more precisely, there  cannot \color{black} be an abelian factor in the product mentioned before, otherwise in that case one would have $q(V) \neq 0$, a contradiction. 
By \cite[Proposition\;11.6]{snow}, we have $H^1(G/P, T_{G/P})=\{0\}$ which implies $H^1(V, T_V)=\{0\}$ as desired; in particular also the third requirement in \eqref{eq:4fold} is fulfilled.

Finally, it remains to check the injectivity of the map $\mu_{T_V}$ as in \eqref{eq:muE} (with $E = T_V$). As recalled in \eqref{eq:injectiveaccazero}, this is equivalent to requiring that $H^0\left(V, M_{T_V} \otimes T_V\right) =\{0\}$. First, notice that the rank of $M_{T_V}$ is greater than $4$ as 
it follows from the assumption $h^0(T_V) \geqslant \; 9$  and the fact that, by \eqref{eq:mukailaz}, $rk(M_{T_V}) = h^0(V, T_V) - 4$. Now, suppose, by contradiction, that there exists a non-zero section 
$\sigma \in H^0\left(V, M_{T_V} \otimes T_V\right)$. This yields $${\mathcal O}_V \stackrel{\sigma}{\hookrightarrow}  M_{T_V} \otimes T_V,$$ which is equivalent to an injective map 
$$M_{T_V}^{\vee} \hookrightarrow T_V$$(cf. \cite[Prop.\,6.3(c), p. 234 and Prop. 6.7, p.235]{H}). This would imply that $M_{T_V}^{\vee}$ has rank less than or equal to $4$, which is a contradiction. Therefore, all the requirements in $\eqref{eq:4fold}$ are fulfilled and the proposition is completely proved.
\end{proof}

\begin{remark}
Proposition \ref{prop:bignesstangent} answers Question 4.5 in \cite{hsiao} when $T_V$ is globally generated and has at least $9$ global sections, as any globally generated bundle is nef. 
\end{remark}

\vskip 7 pt

\noindent
(b) In the same light of Example \ref{ExSurf}-(e), we want to show that Theorem \ref{thm:4fold} gives sufficient, but not necessary, cohomological conditions for bigness. We discuss here an example which has been inspired by \cite[Prop.\;2]{Beau}, concerning rank-two Lazarsfeld-Mukai vector bundles on suitable smooth, projective surfaces. Here we will instead consider suitable Lazarsfeld-Mukai vector bundles on smooth, projective four-folds. For simplicity in what follows, with a small abuse of notation, we will identify line bundles with associated Cartier divisors 
using interchangeably multiplicative and additive notation.

Let $V$ be a smooth, projective four-fold. We assume $q(V)=0$ and we take any ample, globally generated line bundle $L$ on $V$ such that 
$h^0(V, L) : = x \geqslant 4$. 

Let $Y$ be a general element in the linear system $|2L|$;  by Bertini's theorem, $Y$ is smooth. Moreover, the exact sequence 
\begin{equation}\label{eq:ref.20}
0 \rightarrow {\mathcal O}_V(-2L) \rightarrow {\mathcal O}_V \rightarrow {\mathcal O}_Y \rightarrow 0
\end{equation} ensures that $h^0({\mathcal O}_Y) = 1$; indeed, one has 
$h^0({\mathcal O}_V(-2L)) = 0 = h^1({\mathcal O}_V(-2L))$, the second equality following from the ampleness of $L$, 
Serre duality $h^1({\mathcal O}_V(-2L)) = h^3(K_V+ 2L) $ and the Kodaira vanishing theorem; thus $Y$, being smooth, is also irreducible.

Let $A := {\mathcal O}_Y(L)$. Since $L$ is globally generated, then $A$ is globally generated on $Y$. It therefore 
makes sense to consider the vector bundle $E$ on $V$, defined by the exact sequence: 
\begin{equation}\label{eq:Emuklaz}
0 \rightarrow E^{\vee} \rightarrow H^0(A) \otimes {\mathcal O}_V \overset{ev}{\rightarrow} A \rightarrow 0,
\end{equation} which is called the {\em Lazarsfeld-Mukai vector bundle} associated to the pair $(Y,A)$.

\begin{lemma}\label{lem:1} $E$ is big. 
\end{lemma}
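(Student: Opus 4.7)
The plan is to follow the strategy of Theorems~\ref{thm:surf} and \ref{thm:4fold}: compute the top Segre class $s_4(E)$, show that it is strictly positive, verify that $E$ is nef, and then apply \eqref{eq:IIE2} together with \eqref{eq:boundsE}--\eqref{eq:IIG8} to deduce bigness. Set $r:=h^0(A)$, which is the rank of $E$. Note that $E$ is not globally generated in this example, since dualizing \eqref{eq:Emuklaz} will produce a torsion quotient of $E$; consequently nefness cannot be read off from Remark~\ref{rem:numerical} and will be the delicate part of the argument, while the Chern-class bookkeeping is purely formal.

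For the Segre computation I would first dualize \eqref{eq:Emuklaz}. Since $E^{\vee}$ and $H^0(A)\otimes \mathcal{O}_V$ are locally free and $\mathcal{H}om(A,\mathcal{O}_V)=0$ (as $A$ is a torsion $\mathcal{O}_V$-module), this yields
\[
0 \to H^0(A)^{\vee} \otimes \mathcal{O}_V \to E \to \mathcal{E}xt^1(A,\mathcal{O}_V) \to 0.
\]
Because $Y\in|2L|$, twisting $0 \to \mathcal{O}_V(-Y)\to \mathcal{O}_V \to \mathcal{O}_Y \to 0$ by $\mathcal{O}_V(L)$ gives the resolution $0 \to \mathcal{O}_V(-L) \to \mathcal{O}_V(L) \to A \to 0$. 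This both computes $c(A)=(1+L)/(1-L)$ and, upon dualizing, identifies $\mathcal{E}xt^1(A,\mathcal{O}_V)$ with the cokernel of $\mathcal{O}_V(-L)\to \mathcal{O}_V(L)$, namely $A$ itself. Therefore $c(E)=c(\mathcal{O}_V^{r})\cdot c(A)=(1+L)/(1-L)$ modulo terms of degree $>4$, and inverting gives $s(E)=(1-L)/(1+L)$, whose degree-$4$ component is $s_4(E)=2L^4>0$ by ampleness of $L$.

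Nefness of $E$ I would establish by testing on curves. Let $C\subset V$ be any irreducible projective curve and let $\nu:\widetilde C \to V$ denote its normalization composed with the inclusion. Pulling back $0 \to \mathcal{O}_V^{r} \to E \to A \to 0$ gives a map $\mathcal{O}_{\widetilde C}^{r} \to \nu^* E$ between rank-$r$ locally free sheaves which is an isomorphism off the finite set $\nu^{-1}(Y)$, hence injective with torsion cokernel. For any line bundle quotient $\nu^* E \twoheadrightarrow N$, the composition $\mathcal{O}_{\widetilde C}^{r} \to N$ is forced to be nonzero: otherwise $N$ would factor through a torsion sheaf, contradicting its being a line bundle. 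Thus $N$ admits a nonzero global section and $\deg N \geq 0$. This proves $\nu^* E$ is nef on every such $\widetilde C$, hence $E$ is nef on $V$.

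Finally, combining nefness with $s_4(E)>0$, relation \eqref{eq:IIE2} gives $n(E)\geq r+3=\dim V + r-1$; \eqref{eq:boundsE} forces equality, and then \eqref{eq:IIG8} yields $k(E)=n(E)=\dim\mathbb{P}(E)$, i.e., $E$ is big. The core obstacle is the nefness step, since global generation is unavailable; once that is handled via the curve-by-curve argument above, the conclusion is a direct application of the numerical dimension framework developed in Section~\ref{Positivity}.
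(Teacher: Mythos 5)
Your overall strategy (compute $s_4(E)$, prove nefness, conclude via the numerical-dimension framework of Section~\ref{Positivity}) is genuinely different from the paper's, and the Segre computation itself is correct: $c(E)=c(A)=(1+L)/(1-L)$ from the resolution $0\to\mathcal O_V(-L)\to\mathcal O_V(L)\to A\to 0$, hence $s_4(E)=2L^4>0$. However, your starting premise is wrong, and it creates a spurious difficulty that your argument then handles incorrectly. The sequence $0\to H^0(A)^{\vee}\otimes\mathcal O_V\to E\to A\to 0$ that you derive, together with the global generation of $A$ (as a sheaf on $V$, since it is globally generated on $Y$ and supported there) and $h^1(H^0(A)^{\vee}\otimes\mathcal O_V)=h^0(A)\cdot q(V)=0$, shows that $E$ \emph{is} globally generated; the paper records this right after the lemma. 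So nefness is immediate from Remark~\ref{rem:numerical}, and the step you flag as ``the core obstacle'' is not an obstacle at all.

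The hand-rolled nefness argument you substitute has a genuine hole: for a curve $C\subset Y$ (and $Y$ is a threefold, so such curves abound) the map $\mathcal O_{\widetilde C}^{\,r}\to\nu^*E$ is \emph{not} an isomorphism off a finite set --- by right-exactness of $\nu^*$ its cokernel is $\nu^*A$, a line bundle of rank one rather than a torsion sheaf, and the map itself acquires a kernel coming from $\mathrm{Tor}_1(A,\mathcal O_{\widetilde C})$. A line-bundle quotient $N$ of $\nu^*E$ may therefore kill the image of $\mathcal O_{\widetilde C}^{\,r}$; one must then argue separately that such an $N$ is a quotient of $\nu^*A$, hence equals $\nu^*A$ and has degree $L\cdot C>0$. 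The conclusion survives, but the case split is missing. For comparison, the paper's proof is much shorter and avoids both nefness and Segre classes: it twists $0\to H^0(A)^{\vee}\otimes\mathcal O_V\to E\to A\to 0$ by $\mathcal O_V(-L)$ to get $h^0(E(-L))=1$, and then invokes \cite[Example 6.1.23]{Laz2} (effectivity of $S^mE\otimes L^{-1}$ for $L$ ample implies bigness). Your route, once repaired, buys a quantitative statement ($s_4(E)=2L^4$) but at the cost of more machinery.
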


\begin{proof} If one dualizes \eqref{eq:Emuklaz}, one gets 
\begin{equation}\label{eq:dualE1}
0 \rightarrow H^0(A)^{\vee} \otimes {\mathcal O}_V \rightarrow E \rightarrow {\mathcal Ext}^1(A, {\mathcal O}_V) \rightarrow 0;
\end{equation}the sheaf ${\mathcal Ext}^1(A, {\mathcal O}_V)$ is supported on $Y$ and 
${\mathcal Ext}^1(A, {\mathcal O}_V) \simeq {\mathcal Ext}^1({\mathcal O}_Y, {\mathcal O}_V) \otimes A^{-1}$. 
Similarly, dualizing \eqref{eq:ref.20}, we get
$$
0 \rightarrow {\mathcal O}_V \rightarrow {\mathcal Hom}({\mathcal O}_V(-2L), {\mathcal O}_V) \simeq {\mathcal O}_V(Y) \rightarrow {\mathcal Ext}^1({\mathcal O}_Y, {\mathcal O}_V) \rightarrow 0,
$$i.e. ${\mathcal Ext}^1({\mathcal O}_Y, {\mathcal O}_V) \simeq {\mathcal O}_Y(Y)$. Thus, \eqref{eq:dualE1} yields
\begin{equation}\label{eq:dualE2}
0 \rightarrow H^0(A)^{\vee} \otimes {\mathcal O}_V \rightarrow E \rightarrow A \rightarrow 0,
\end{equation}  since ${\mathcal Ext}^1(A, {\mathcal O}_V) \cong {\mathcal Ext}^1({\mathcal O}_Y, {\mathcal O}_V) \otimes A^{-1} \cong 
{\mathcal O}_Y(Y) \otimes A^{-1} \simeq {\mathcal O}_Y(2L) \otimes {\mathcal O}_Y(-L) \simeq {\mathcal O}_Y(L)=A$. 

Tensoring \eqref{eq:dualE2} by ${\mathcal O}_V (-L)$  gives 
$$0 \rightarrow H^0(A)^{\vee} \otimes {\mathcal O}_V (-L) \rightarrow E (-L) \rightarrow \mathcal O_Y \rightarrow 0,$$from which 
we deduce $h^0(E(-L)) = 1$, as $h^0({\mathcal O}_V (-L)) = 0 = h^1({\mathcal O}_V (-L))$ (the latter equality is a consequence of Serre duality, 
Kodaira vanishing theorem and the ampleness of $L$). Since $L$ is ample and $E(-L)$ is effective, by \cite[Example 6.1.23, p. 18]{Laz2} it follows that $E$ is big. 
\end{proof}

We want to show that, nonetheless, the pair $(V, E)$ as above satisfies all but one of the assumptions in Theorem \ref{thm:4fold}. 
As for the regularity of $V$, we have $q(V)=0$ by assumption. Moreover, the fact that $E$ is globally generated follows from \eqref{eq:dualE2} and the global generation of 
$A$. 

Let us show that $h^i((\det E)^{-1}) =0$, for any $i \in \{1, 2, 3\}$; if we tensor \eqref{eq:ref.20} by ${\mathcal O}_V(L)$, we get 
\begin{equation}\label{eq:ref.21}
0 \rightarrow {\mathcal O}_V(-L) \rightarrow {\mathcal O}_V (L)\rightarrow {\mathcal O}_Y(L) = A \rightarrow 0.
\end{equation} Combining \eqref{eq:Emuklaz} and \eqref{eq:ref.21}, we get $c_1(E)=2L$.
 Therefore,  
$$
h^i((\det E)^{-1})=h^i(-2L)=h^{4-i}(K_V+2L)=0, \qquad i \in \{1, 2, 3\},
$$ the latter equality due to the Kodaira vanishing theorem and the ampleness of $L$.

 As for condition $h^3(E^{\vee} \otimes (\det E)^{-1})=0$, if we 
tensor \eqref{eq:Emuklaz} by ${\mathcal O}_V(-2L)$, we get the following exact sequence:
$$
0 \rightarrow E^{\vee} \otimes {\mathcal O}_V(-2L) \rightarrow H^0(A) \otimes {\mathcal O}_V(-2L) \rightarrow A \otimes {\mathcal O}_V(-2L) \rightarrow 0.
$$
By the definition of $A$ and the fact $c_1(E) = 2L$, we have therefore 
$$
0 \rightarrow E^{\vee} \otimes (\det E)^{-1} \rightarrow H^0(A) \otimes {\mathcal O}_V(-2L) \rightarrow {\mathcal O}_Y(-L) \rightarrow 0;
$$in order to show that $h^3(E^{\vee} \otimes (\det E)^{-1}) =0$, it suffices to prove that 
$h^2({\mathcal O}_Y(-L)) = 0 = h^3({\mathcal O_V}(-2L))$. \color{black} Notice indeed that $h^2({\mathcal O}_Y(-L))=h^1(K_Y+L_{|Y})=0$, as it follows from Serre duality, 
the Kodaira vanishing theorem and the ampleness of $L_{|Y} = A $ on $Y$. Similarly,  
$h^3({\mathcal O}_V(-2L))= h^1(K_V+2L)=0$. 

From \eqref{eq:dualE2}, one has $rk (E) = h^0(Y,A)$; moreover, since $q(V) = 0$, \eqref{eq:dualE2} also gives $h^0(V,E) = 2 h^0(Y,A) = 2 rk (E)$. 
From \eqref{eq:ref.21} and $h^0({\mathcal O}_V(-L)) = 0 = h^1({\mathcal O}_V(-L))$, it follows that $h^0(Y,A) = h^0(V, L) = x$ so  
$rk(E) = x$ and $h^0(E) = 2x$. Since $x \geqslant 4$ by assumption, one has that also condition $h^0(E) \geqslant rk(E) + 4$ is satisfied.

Finally, notice that, from \eqref{eq:mukailaz} and the fact that $h^0(V,E) = 2 rk (E)$, one has $rk (M_E) = rk (E)$; we want to show more precisely 
that 
\begin{equation}\label{eq:dualME}
M_E \simeq E^{\vee}.
\end{equation}If the previous isomorphism holds true, then we will have 
$$h^0(M_E \otimes E) = h^0(E^{\vee} \otimes E) = h^0({\mathcal End} (E)) \geqslant 1$$(as, for any vector bundle, one has 
$\mathbb{C}^* \subseteq H^0({\mathcal End} (E))$) so from \eqref{eq:injectiveaccazero}, $\mu_E$ will be be not injective.

To prove \eqref{eq:dualME}, from the cohomology sequence associated to \eqref{eq:dualE2} and $q(V) = 0$, we get 
$$0 \to H^0(A)^{\vee} \to H^0(E) \to H^0(A) \to 0.$$Plugging it within \eqref{eq:mukailaz}, \eqref{eq:Emuklaz} and \eqref{eq:dualE2}, we get the following commutative diagram

\begin{displaymath}
\begin{array}{ccccccc}
      &             &     &     0                       &     &     0     &     \\
       &             &     &     \downarrow                        &     &     \downarrow      &     \\
  &  &  & H^0(A)^{\vee} \otimes \mathcal O_V & =  & H^0(A)^{\vee} \otimes \mathcal O_V &   \\
      &            &     &     \downarrow                        &     &     \downarrow      &     \\
0 \to & M_{E} & \to & H^0(E) \otimes \mathcal O_V& \longrightarrow & E & \to 0  \\
     &             &     &     \downarrow                        &     &     \downarrow      &     \\
0 \to  & E^{\vee} & \to & H^0(A) \otimes \mathcal O_V & \longrightarrow  & A & \to 0  \\
          &             &     &     \downarrow                        &     &     \downarrow      &     \\
     &             &     &     0                       &     &     0     &
\end{array}
\end{displaymath}which proves \eqref{eq:dualME}. 

\color{black}


\end{document}